 \def\dated#1{\def\thedate{#1}}%
\newdimen\high%
\newdimen\ul%
\newdimen\wdth%
\def\ratchet#1#2{\ifnum#1<#2\global #1=#2\fi}%
\def\ifnextchar#1#2#3{\let\@tempe%
#1\def\@tempa{#2}\def\@tempb{#3}\futurelet%
    \@tempc\@ifnch}%
\def\@ifnch{\ifx \@tempc \@sptoken \let\@tempd\@xifnch%
      \else \ifx \@tempc \@tempe\let\@tempd\@tempa\else\let\@tempd\@tempb\fi%
      \fi \@tempd}%
\def\:{\let\@sptoken= } \:  
\def\:{\@xifnch} \expandafter\def\: {\futurelet\@tempc\@ifnch}%
\let\ifnextchar\@ifnextchar%
\newdimen\axis \axis=\fontdimen22\textfont2%
\def\scalefactor#1{\ul=#1\ul \X@xbase=#1\X@xbase \Y@ybase=#1\Y@ybase}%
\def\fontscale#1{%
\if#1h\relax%
\font\xydashfont=xydash10 scaled \magstephalf%
\font\xyatipfont=xyatip10 scaled \magstephalf%
\font\xybtipfont=xybtip10 scaled \magstephalf%
\font\xybsqlfont=xybsql10 scaled \magstephalf%
\font\xycircfont=xycirc10 scaled \magstephalf%
\else%
\font\xydashfont=xydash10 scaled \magstep#1%
\font\xyatipfont=xyatip10 scaled \magstep#1%
\font\xybtipfont=xybtip10 scaled \magstep#1%
\font\xybsqlfont=xybsql10 scaled \magstep#1%
\font\xycircfont=xycirc10 scaled \magstep#1%
\fi}%
\def\bfig{\vcenter\bgroup\xy}%
\def\efig{\endxy\egroup}%
\def\car#1#2\nil{#1}%
\def\morphism{\ifnextchar({\morphismp}{\morphismp(0,0)}}%
\def\morphismp(#1){\ifnextchar|{\morphismpp(#1)}{\morphismpp(#1)|a|}}%
\def\morphismpp(#1)|#2|{\ifnextchar/{\morphismppp(#1)|#2|}%
    {\morphismppp(#1)|#2|/>/}}%
\def\morphismppp(#1)|#2|/#3/{%
    \ifnextchar<{\morphismpppp(#1)|#2|/#3/}%
    {\morphismpppp(#1)|#2|/#3/<\default,0>}}%
\def\morphismpppp(#1,#2)|#3|/#4/<#5,#6>[#7`#8;#9]{%
\xend#1\advance \xend by #5%
\yend#2\advance \yend by #6%
\domorphism(#1,#2)|#3|/#4/<#5,#6>[{#7}`{#8};{#9}]}%
\def\domorphism(#1,#2)|#3|/#4/<#5,#6>[#7`#8;#9]{%
\def\next{\car#4.\nil}%
\if@\next\relax%
 \if#3l%
  \ifnum #6>0%
   \POS(#1,#2)*+!!<0ex,\axis>{#7}\ar#4^-{#9} (\xend,\yend)*+!!<0ex,\axis>{#8}%
  \else%
   \POS(#1,#2)*+!!<0ex,\axis>{#7}\ar#4_-{#9} (\xend,\yend)*+!!<0ex,\axis>{#8}%
  \fi%
 \else \if#3m%
    \setbox0\hbox{$#9$}%
   \ifdim \wd0=0pt%
     \POS(#1,#2)*+!!<0ex,\axis>{#7}\ar#4 (\xend,\yend)*+!!<0ex,\axis>{#8}%
   \else%
     \POS(#1,#2)*+!!<0ex,\axis>{#7}\ar#4|-*+<1pt,4pt>{\labelstyle#9}%
       (\xend,\yend)*+!!<0ex,\axis>{#8}%
   \fi%
 \else \if#3r%
  \ifnum #6<0%
   \POS(#1,#2)*+!!<0ex,\axis>{#7}\ar#4^-{#9} (\xend,\yend)*+!!<0ex,\axis>{#8}%
  \else%
   \POS(#1,#2)*+!!<0ex,\axis>{#7}\ar#4_-{#9} (\xend,\yend)*+!!<0ex,\axis>{#8}%
  \fi%
 \else \if#3a%
  \ifnum #5>0%
   \POS(#1,#2)*+!!<0ex,\axis>{#7}\ar#4^-{#9} (\xend,\yend)*+!!<0ex,\axis>{#8}%
  \else%
   \POS(#1,#2)*+!!<0ex,\axis>{#7}\ar#4_-{#9} (\xend,\yend)*+!!<0ex,\axis>{#8}%
  \fi%
 \else \if#3b%
  \ifnum #5<0%
   \POS(#1,#2)*+!!<0ex,\axis>{#7}\ar#4^-{#9} (\xend,\yend)*+!!<0ex,\axis>{#8}%
  \else%
   \POS(#1,#2)*+!!<0ex,\axis>{#7}\ar#4_-{#9} (\xend,\yend)*+!!<0ex,\axis>{#8}%
  \fi%
 \else%
   \POS(#1,#2)*+!!<0ex,\axis>{#7}\ar#4 (\xend,\yend)*+!!<0ex,\axis>{#8}%
 \fi\fi\fi\fi\fi%
\else%
 \if#3l%
  \ifnum #6>0%
   \POS(#1,#2)*+!!<0ex,\axis>{#7}\ar@{#4}^-{#9} (\xend,\yend)*+!!<0ex,\axis>{#8}%
  \else%
   \POS(#1,#2)*+!!<0ex,\axis>{#7}\ar@{#4}_-{#9} (\xend,\yend)*+!!<0ex,\axis>{#8}%
  \fi%
 \else \if#3m%
    \setbox0\hbox{$#9$}%
   \ifdim \wd0=0pt%
     \POS(#1,#2)*+!!<0ex,\axis>{#7}\ar@{#4} (\xend,\yend)*+!!<0ex,\axis>{#8}%
   \else%
     \POS(#1,#2)*+!!<0ex,\axis>{#7}\ar@{#4}|-*+<1pt,4pt>{\labelstyle#9}%
         (\xend,\yend)*+!!<0ex,\axis>{#8}%
   \fi%
 \else \if#3r%
  \ifnum #6<0%
   \POS(#1,#2)*+!!<0ex,\axis>{#7}\ar@{#4}^-{#9} (\xend,\yend)*+!!<0ex,\axis>{#8}%
  \else%
   \POS(#1,#2)*+!!<0ex,\axis>{#7}\ar@{#4}_-{#9} (\xend,\yend)*+!!<0ex,\axis>{#8}%
  \fi%
 \else \if#3a%
  \ifnum #5>0%
   \POS(#1,#2)*+!!<0ex,\axis>{#7}\ar@{#4}^-{#9} (\xend,\yend)*+!!<0ex,\axis>{#8}%
  \else%
   \POS(#1,#2)*+!!<0ex,\axis>{#7}\ar@{#4}_-{#9} (\xend,\yend)*+!!<0ex,\axis>{#8}%
  \fi%
 \else \if#3b%
  \ifnum #5<0%
   \POS(#1,#2)*+!!<0ex,\axis>{#7}\ar@{#4}^-{#9} (\xend,\yend)*+!!<0ex,\axis>{#8}%
  \else%
   \POS(#1,#2)*+!!<0ex,\axis>{#7}\ar@{#4}_-{#9} (\xend,\yend)*+!!<0ex,\axis>{#8}%
  \fi%
 \else%
   \POS(#1,#2)*+!!<0ex,\axis>{#7}\ar@{#4} (\xend,\yend)*+!!<0ex,\axis>{#8}%
 \fi\fi\fi\fi\fi%
\fi\ignorespaces}%
\def\vect(#1,#2)/#3/<#4,#5>{%
 \xend#1 \yend#2 \advance\xend by #4 \advance\yend by #5%
     \POS(#1,#2)\ar#3 (\xend,\yend)}%
\def\squarepppp(#1,#2)|#3|/#4`#5`#6`#7/<#8>[#9]{%
\xpos#1\ypos#2%
\def\next|##1##2##3##4|{%
 \def\xa{##1}\def\xb{##2}\def\xc{##3}\def\xd{##4}\ignorespaces}%
\next|#3|%
\def\next<##1,##2>{\deltax=##1\deltay=##2\ignorespaces}%
\next<#8>%
\def\next[##1`##2`##3`##4;##5`##6`##7`##8]{%
    \def\nodea{##1}\def\nodeb{##2}\def\nodec{##3}\def\noded{##4}%
    \def\labela{##5}\def\labelb{##6}\def\labelc{##7}\def\labeld{##8}\ignorespaces}%
\next[#9]%
\morphism(\xpos,\ypos)|\xd|/{#7}/<\deltax,0>[\nodec`\noded;\labeld]%
\advance \ypos by \deltay%
\morphism(\xpos,\ypos)|\xb|/{#5}/<0,-\deltay>[\nodea`\nodec;\labelb]%
\morphism(\xpos,\ypos)|\xa|/{#4}/<\deltax,0>[\nodea`\nodeb;\labela]%
 \advance \xpos by \deltax%
\morphism(\xpos,\ypos)|\xc|/{#6}/<0,-\deltay>[\nodeb`\noded;\labelc]%
\ignorespaces}%
\def\square{\ifnextchar({\squarep}{\squarep(0,0)}}%
\def\squarep(#1){\ifnextchar|{\squarepp(#1)}{\squarepp(#1)|alrb|}}%
\def\squarepp(#1)|#2|{\ifnextchar/{\squareppp(#1)|#2|}%
    {\squareppp(#1)|#2|/>`>`>`>/}}%
\def\squareppp(#1)|#2|/#3`#4`#5`#6/{%
    \ifnextchar<{\squarepppp(#1)|#2|/#3`#4`#5`#6/}%
    {\squarepppp(#1)|#2|/#3`#4`#5`#6/<\default,\default>}}%
\def\ptrianglepppp(#1,#2)|#3|/#4`#5`#6/<#7>[#8]{%
\xpos#1\ypos#2%
\def\next|##1##2##3|{\def\xa{##1}\def\xb{##2}\def\xc{##3}}%
\next|#3|%
\def\next<##1,##2>{\deltax=##1\deltay=##2\ignorespaces}%
\next<#7>%
\def\next[##1`##2`##3;##4`##5`##6]{%
    \def\nodea{##1}\def\nodeb{##2}\def\nodec{##3}%
    \def\labela{##4}\def\labelb{##5}\def\labelc{##6}}%
\next[#8]%
\advance\ypos by \deltay%
\morphism(\xpos,\ypos)|\xa|/{#4}/<\deltax,0>[\nodea`\nodeb;\labela]%
\morphism(\xpos,\ypos)|\xb|/{#5}/<0,-\deltay>[\nodea`\nodec;\labelb]%
\advance\xpos by \deltax%
\morphism(\xpos,\ypos)|\xc|/{#6}/<-\deltax,-\deltay>[\nodeb`\nodec;\labelc]%
\ignorespaces}%
\def\qtrianglepppp(#1,#2)|#3|/#4`#5`#6/<#7>[#8]{%
\xpos#1\ypos#2%
\def\next|##1##2##3|{\def\xa{##1}\def\xb{##2}\def\xc{##3}}%
\next|#3|%
\def\next<##1,##2>{\deltax=##1\deltay=##2\ignorespaces}%
\next<#7>%
\def\next[##1`##2`##3;##4`##5`##6]{%
    \def\nodea{##1}\def\nodeb{##2}\def\nodec{##3}%
    \def\labela{##4}\def\labelb{##5}\def\labelc{##6}}%
\next[#8]%
\advance\ypos by \deltay%
\morphism(\xpos,\ypos)|\xa|/{#4}/<\deltax,0>[\nodea`\nodeb;\labela]%
\morphism(\xpos,\ypos)|\xb|/{#5}/<\deltax,-\deltay>[\nodea`\nodec;\labelb]%
\advance\xpos by \deltax%
\morphism(\xpos,\ypos)|\xc|/{#6}/<0,-\deltay>[\nodeb`\nodec;\labelc]%
\ignorespaces}%
\def\dtrianglepppp(#1,#2)|#3|/#4`#5`#6/<#7>[#8]{%
\xpos#1\ypos#2%
\def\next|##1##2##3|{\def\xa{##1}\def\xb{##2}\def\xc{##3}}%
\next|#3|%
\def\next<##1,##2>{\deltax=##1\deltay=##2\ignorespaces}%
\next<#7>%
\def\next[##1`##2`##3;##4`##5`##6]{%
    \def\nodea{##1}\def\nodeb{##2}\def\nodec{##3}%
    \def\labela{##4}\def\labelb{##5}\def\labelc{##6}}%
\next[#8]%
\morphism(\xpos,\ypos)|\xc|/{#6}/<\deltax,0>[\nodeb`\nodec;\labelc]%
\advance\ypos by \deltay\advance \xpos by \deltax%
\morphism(\xpos,\ypos)|\xa|/{#4}/<-\deltax,-\deltay>[\nodea`\nodeb;\labela]%
\morphism(\xpos,\ypos)|\xb|/{#5}/<0,-\deltay>[\nodea`\nodec;\labelb]%
\ignorespaces}%
\def\btrianglepppp(#1,#2)|#3|/#4`#5`#6/<#7>[#8]{%
\xpos#1\ypos#2%
\def\next|##1##2##3|{\def\xa{##1}\def\xb{##2}\def\xc{##3}}%
\next|#3|%
\def\next<##1,##2>{\deltax=##1\deltay=##2\ignorespaces}%
\next<#7>%
\def\next[##1`##2`##3;##4`##5`##6]{%
    \def\nodea{##1}\def\nodeb{##2}\def\nodec{##3}%
    \def\labela{##4}\def\labelb{##5}\def\labelc{##6}}%
\next[#8]%
\morphism(\xpos,\ypos)|\xc|/{#6}/<\deltax,0>[\nodeb`\nodec;\labelc]%
\advance\ypos by \deltay%
\morphism(\xpos,\ypos)|\xa|/{#4}/<0,-\deltay>[\nodea`\nodeb;\labela]%
\morphism(\xpos,\ypos)|\xb|/{#5}/<\deltax,-\deltay>[\nodea`\nodec;\labelb]%
\ignorespaces}%
\def\Atrianglepppp(#1,#2)|#3|/#4`#5`#6/<#7>[#8]{%
\xpos#1\ypos#2%
\def\next|##1##2##3|{\def\xa{##1}\def\xb{##2}\def\xc{##3}}%
\next|#3|%
\def\next<##1,##2>{\deltax=##1\deltay=##2\ignorespaces}%
\next<#7>%
\def\next[##1`##2`##3;##4`##5`##6]{%
    \def\nodea{##1}\def\nodeb{##2}\def\nodec{##3}%
    \def\labela{##4}\def\labelb{##5}\def\labelc{##6}}%
\next[#8]%
\multiply\deltax by 2%
\morphism(\xpos,\ypos)|\xc|/{#6}/<\deltax,0>[\nodeb`\nodec;\labelc]%
\divide\deltax by 2%
\advance\ypos by \deltay\advance\xpos by \deltax%
\morphism(\xpos,\ypos)|\xa|/{#4}/<-\deltax,-\deltay>[\nodea`\nodeb;\labela]%
\morphism(\xpos,\ypos)|\xb|/{#5}/<\deltax,-\deltay>[\nodea`\nodec;\labelb]%
\ignorespaces}%
\def\Vtrianglepppp(#1,#2)|#3|/#4`#5`#6/<#7>[#8]{%
\xpos#1\ypos#2%
\def\next|##1##2##3|{\def\xa{##1}\def\xb{##2}\def\xc{##3}}%
\next|#3|%
\def\next<##1,##2>{\deltax=##1\deltay=##2\ignorespaces}%
\next<#7>%
\def\next[##1`##2`##3;##4`##5`##6]{%
    \def\nodea{##1}\def\nodeb{##2}\def\nodec{##3}%
    \def\labela{##4}\def\labelb{##5}\def\labelc{##6}}%
\next[#8]%
\advance\ypos by \deltay%
\morphism(\xpos,\ypos)|\xb|/{#5}/<\deltax,-\deltay>[\nodea`\nodec;\labelb]%
\multiply\deltax by 2%
\morphism(\xpos,\ypos)|\xa|/{#4}/<\deltax,0>[\nodea`\nodeb;\labela]%
\advance\xpos by \deltax \divide \deltax by 2%
\morphism(\xpos,\ypos)|\xc|/{#6}/<-\deltax,-\deltay>[\nodeb`\nodec;\labelc]%
\ignorespaces}%
\def\Ctrianglepppp(#1,#2)|#3|/#4`#5`#6/<#7>[#8]{%
\xpos#1\ypos#2%
\def\next|##1##2##3|{\def\xa{##1}\def\xb{##2}\def\xc{##3}}%
\next|#3|%
\def\next<##1,##2>{\deltax=##1\deltay=##2\ignorespaces}%
\next<#7>%
\def\next[##1`##2`##3;##4`##5`##6]{%
    \def\nodea{##1}\def\nodeb{##2}\def\nodec{##3}%
    \def\labela{##4}\def\labelb{##5}\def\labelc{##6}}%
\next[#8]%
\advance \ypos by \deltay%
\morphism(\xpos,\ypos)|\xc|/{#6}/<\deltax,-\deltay>[\nodeb`\nodec;\labelc]%
\advance\ypos by \deltay \advance \xpos by \deltax%
\morphism(\xpos,\ypos)|\xa|/{#4}/<-\deltax,-\deltay>[\nodea`\nodeb;\labela]%
\multiply\deltay by 2%
\morphism(\xpos,\ypos)|\xb|/{#5}/<0,-\deltay>[\nodea`\nodec;\labelb]%
\ignorespaces}%
\def\Dtrianglepppp(#1,#2)|#3|/#4`#5`#6/<#7>[#8]{%
\xpos#1\ypos#2%
\def\next|##1##2##3|{\def\xa{##1}\def\xb{##2}\def\xc{##3}}%
\next|#3|%
\def\next<##1,##2>{\deltax=##1\deltay=##2\ignorespaces}%
\next<#7>%
\def\next[##1`##2`##3;##4`##5`##6]{%
    \def\nodea{##1}\def\nodeb{##2}\def\nodec{##3}%
    \def\labela{##4}\def\labelb{##5}\def\labelc{##6}}%
\next[#8]%
\advance\xpos by \deltax \advance\ypos by \deltay%
\morphism(\xpos,\ypos)|\xc|/{#6}/<-\deltax,-\deltay>[\nodeb`\nodec;\labelc]%
\advance\xpos by -\deltax \advance\ypos by \deltay%
\morphism(\xpos,\ypos)|\xb|/{#5}/<\deltax,-\deltay>[\nodea`\nodeb;\labelb]%
\multiply \deltay by 2%
\morphism(\xpos,\ypos)|\xa|/{#4}/<0,-\deltay>[\nodea`\nodec;\labela]%
\ignorespaces}%
\def\ptrianglep(#1){\ifnextchar|{\ptrianglepp(#1)}{\ptrianglepp(#1)|alr|}}%
\def\ptrianglepp(#1)|#2|{\ifnextchar/{\ptriangleppp(#1)|#2|}%
    {\ptriangleppp(#1)|#2|/>`>`>/}}%
\def\ptriangleppp(#1)|#2|/#3`#4`#5/{%
    \ifnextchar<{\ptrianglepppp(#1)|#2|/#3`#4`#5/}%
    {\ptrianglepppp(#1)|#2|/#3`#4`#5/<\default,\default>}}%
\def\qtrianglep(#1){\ifnextchar|{\qtrianglepp(#1)}{\qtrianglepp(#1)|alr|}}%
\def\qtrianglepp(#1)|#2|{\ifnextchar/{\qtriangleppp(#1)|#2|}%
    {\qtriangleppp(#1)|#2|/>`>`>/}}%
\def\qtriangleppp(#1)|#2|/#3`#4`#5/{%
    \ifnextchar<{\qtrianglepppp(#1)|#2|/#3`#4`#5/}%
    {\qtrianglepppp(#1)|#2|/#3`#4`#5/<\default,\default>}}%
\def\dtrianglep(#1){\ifnextchar|{\dtrianglepp(#1)}{\dtrianglepp(#1)|lrb|}}%
\def\dtrianglepp(#1)|#2|{\ifnextchar/{\dtriangleppp(#1)|#2|}%
    {\dtriangleppp(#1)|#2|/>`>`>/}}%
\def\dtriangleppp(#1)|#2|/#3`#4`#5/{%
    \ifnextchar<{\dtrianglepppp(#1)|#2|/#3`#4`#5/}%
    {\dtrianglepppp(#1)|#2|/#3`#4`#5/<\default,\default>}}%
\def\btrianglep(#1){\ifnextchar|{\btrianglepp(#1)}{\btrianglepp(#1)|lrb|}}%
\def\btrianglepp(#1)|#2|{\ifnextchar/{\btriangleppp(#1)|#2|}%
    {\btriangleppp(#1)|#2|/>`>`>/}}%
\def\btriangleppp(#1)|#2|/#3`#4`#5/{%
    \ifnextchar<{\btrianglepppp(#1)|#2|/#3`#4`#5/}%
    {\btrianglepppp(#1)|#2|/#3`#4`#5/<\default,\default>}}%
\def\Atrianglep(#1){\ifnextchar|{\Atrianglepp(#1)}{\Atrianglepp(#1)|lrb|}}%
\def\Atrianglepp(#1)|#2|{\ifnextchar/{\Atriangleppp(#1)|#2|}%
    {\Atriangleppp(#1)|#2|/>`>`>/}}%
\def\Atriangleppp(#1)|#2|/#3`#4`#5/{%
    \ifnextchar<{\Atrianglepppp(#1)|#2|/#3`#4`#5/}%
    {\Atrianglepppp(#1)|#2|/#3`#4`#5/<\default,\default>}}%
\def\Vtriangle{\ifnextchar({\Vtrianglep}{\Vtrianglep(0,0)}}%
\def\Vtrianglep(#1){\ifnextchar|{\Vtrianglepp(#1)}{\Vtrianglepp(#1)|alb|}}%
\def\Vtrianglepp(#1)|#2|{\ifnextchar/{\Vtriangleppp(#1)|#2|}%
    {\Vtriangleppp(#1)|#2|/>`>`>/}}%
\def\Vtriangleppp(#1)|#2|/#3`#4`#5/{%
    \ifnextchar<{\Vtrianglepppp(#1)|#2|/#3`#4`#5/}%
    {\Vtrianglepppp(#1)|#2|/#3`#4`#5/<\default,\default>}}%
\def\Ctrianglep(#1){\ifnextchar|{\Ctrianglepp(#1)}{\Ctrianglepp(#1)|arb|}}%
\def\Ctrianglepp(#1)|#2|{\ifnextchar/{\Ctriangleppp(#1)|#2|}%
    {\Ctriangleppp(#1)|#2|/>`>`>/}}%
\def\Ctriangleppp(#1)|#2|/#3`#4`#5/{%
    \ifnextchar<{\Ctrianglepppp(#1)|#2|/#3`#4`#5/}%
    {\Ctrianglepppp(#1)|#2|/#3`#4`#5/<\default,\default>}}%
\def\Dtrianglep(#1){\ifnextchar|{\Dtrianglepp(#1)}{\Dtrianglepp(#1)|alb|}}%
\def\Dtrianglepp(#1)|#2|{\ifnextchar/{\Dtriangleppp(#1)|#2|}%
    {\Dtriangleppp(#1)|#2|/>`>`>/}}%
\def\Dtriangleppp(#1)|#2|/#3`#4`#5/{%
    \ifnextchar<{\Dtrianglepppp(#1)|#2|/#3`#4`#5/}%
    {\Dtrianglepppp(#1)|#2|/#3`#4`#5/<\default,\default>}}%
\def\Atrianglepairpppp(#1)|#2|/#3`#4`#5`#6`#7/<#8>[#9]{%
\def\next(##1,##2){\xpos##1\ypos##2}%
\next(#1)%
\def\next|##1##2##3##4##5|{\def\xa{##1}\def\xb{##2}%
\def\xc{##3}\def\xd{##4}\def\xe{##5}}%
\next|#2|%
\def\next<##1,##2>{\deltax=##1\deltay=##2\ignorespaces}%
\next<#8>%
\def\next[##1`##2`##3`##4;##5`##6`##7`##8`##9]{%
 \def\nodea{##1}\def\nodeb{##2}\def\nodec{##3}\def\noded{##4}%
 \def\labela{##5}\def\labelb{##6}\def\labelc{##7}\def\labeld{##8}\def\labele{##9}}%
\next[#9]%
\morphism(\xpos,\ypos)|\xd|/{#6}/<\deltax,0>[\nodeb`\nodec;\labeld]%
\advance\xpos by \deltax%
\morphism(\xpos,\ypos)|\xe|/{#7}/<\deltax,0>[\nodec`\noded;\labele]%
\advance\ypos by \deltay%
\morphism(\xpos,\ypos)|\xa|/{#3}/<-\deltax,-\deltay>[\nodea`\nodeb;\labela]%
\morphism(\xpos,\ypos)|\xb|/{#4}/<0,-\deltay>[\nodea`\nodec;\labelb]%
\morphism(\xpos,\ypos)|\xc|/{#5}/<\deltax,-\deltay>[\nodea`\noded;\labelc]%
\ignorespaces}%
\def\Vtrianglepairpppp(#1)|#2|/#3`#4`#5`#6`#7/<#8>[#9]{%
\def\next(##1,##2){\xpos##1\ypos##2}%
\next(#1)%
\def\next|##1##2##3##4##5|{\def\xa{##1}\def\xb{##2}%
\def\xc{##3}\def\xd{##4}\def\xe{##5}}%
\next|#2|%
\def\next<##1,##2>{\deltax=##1\deltay=##2\ignorespaces}%
\next<#8>%
\def\next[##1`##2`##3`##4;##5`##6`##7`##8`##9]{%
 \def\nodea{##1}\def\nodeb{##2}\def\nodec{##3}\def\noded{##4}%
 \def\labela{##5}\def\labelb{##6}\def\labelc{##7}\def\labeld{##8}\def\labele{##9}}%
\next[#9]%
\advance\ypos by \deltay%
\morphism(\xpos,\ypos)|\xa|/{#3}/<\deltax,0>[\nodea`\nodeb;\labela]%
\morphism(\xpos,\ypos)|\xc|/{#5}/<\deltax,-\deltay>[\nodea`\noded;\labelc]%
\advance\xpos by \deltax%
\morphism(\xpos,\ypos)|\xb|/{#4}/<\deltax,0>[\nodeb`\nodec;\labelb]%
\morphism(\xpos,\ypos)|\xd|/{#6}/<0,-\deltay>[\nodeb`\noded;\labeld]%
\advance\xpos by \deltax%
\morphism(\xpos,\ypos)|\xe|/{#7}/<-\deltax,-\deltay>[\nodec`\noded;\labele]%
\ignorespaces}%
\def\Ctrianglepairpppp(#1)|#2|/#3`#4`#5`#6`#7/<#8>[#9]{%
\def\next(##1,##2){\xpos##1\ypos##2}%
\next(#1)%
\def\next|##1##2##3##4##5|{\def\xa{##1}\def\xb{##2}%
\def\xc{##3}\def\xd{##4}\def\xe{##5}}%
\next|#2|%
\def\next<##1,##2>{\deltax=##1\deltay=##2\ignorespaces}%
\next<#8>%
\def\next[##1`##2`##3`##4;##5`##6`##7`##8`##9]{%
 \def\nodea{##1}\def\nodeb{##2}\def\nodec{##3}\def\noded{##4}%
 \def\labela{##5}\def\labelb{##6}\def\labelc{##7}\def\labeld{##8}\def\labele{##9}}%
\next[#9]%
\advance\ypos by \deltay%
\morphism(\xpos,\ypos)|\xe|/{#7}/<0,-\deltay>[\nodec`\noded;\labele]%
\advance\xpos by -\deltax%
\morphism(\xpos,\ypos)|\xc|/{#5}/<\deltax,0>[\nodeb`\nodec;\labelc]%
\morphism(\xpos,\ypos)|\xd|/{#6}/<\deltax,-\deltay>[\nodeb`\noded;\labeld]%
\advance\ypos by \deltay%
\advance\xpos by \deltax%
\morphism(\xpos,\ypos)|\xa|/{#3}/<-\deltax,-\deltay>[\nodea`\nodeb;\labela]%
\morphism(\xpos,\ypos)|\xb|/{#4}/<0,-\deltay>[\nodea`\nodec;\labelb]%
\ignorespaces}%
\def\Dtrianglepairpppp(#1)|#2|/#3`#4`#5`#6`#7/<#8>[#9]{%
\def\next(##1,##2){\xpos##1\ypos##2}%
\next(#1)%
\def\next|##1##2##3##4##5|{\def\xa{##1}\def\xb{##2}%
\def\xc{##3}\def\xd{##4}\def\xe{##5}}%
\next|#2|%
\def\next<##1,##2>{\deltax=##1\deltay=##2\ignorespaces}%
\next<#8>%
\def\next[##1`##2`##3`##4;##5`##6`##7`##8`##9]{%
 \def\nodea{##1}\def\nodeb{##2}\def\nodec{##3}\def\noded{##4}%
 \def\labela{##5}\def\labelb{##6}\def\labelc{##7}\def\labeld{##8}\def\labele{##9}}%
\next[#9]%
\advance\ypos by \deltay%
\morphism(\xpos,\ypos)|\xc|/{#5}/<\deltax,0>[\nodeb`\nodec;\labelc]%
\morphism(\xpos,\ypos)|\xd|/{#6}/<0,-\deltay>[\nodeb`\noded;\labeld]%
\advance\ypos by \deltay%
\morphism(\xpos,\ypos)|\xa|/{#3}/<0,-\deltay>[\nodea`\nodeb;\labela]%
\morphism(\xpos,\ypos)|\xb|/{#4}/<\deltax,-\deltay>[\nodea`\nodec;\labelb]%
\advance\ypos by -\deltay%
\advance\xpos by \deltax%
\morphism(\xpos,\ypos)|\xe|/{#7}/<-\deltax,-\deltay>[\nodec`\noded;\labele]%
\ignorespaces}%
\def\Atrianglepairp(#1){\ifnextchar|{\Atrianglepairpp(#1)}%
{\Atrianglepairpp(#1)|lmrbb|}}%
\def\Atrianglepairpp(#1)|#2|{\ifnextchar/{\Atrianglepairppp(#1)|#2|}%
    {\Atrianglepairppp(#1)|#2|/>`>`>`>`>/}}%
\def\Atrianglepairppp(#1)|#2|/#3`#4`#5`#6`#7/{%
    \ifnextchar<{\Atrianglepairpppp(#1)|#2|/#3`#4`#5`#6`#7/}%
    {\Atrianglepairpppp(#1)|#2|/#3`#4`#5`#6`#7/<\default,\default>}}%
\def\Vtrianglepairp(#1){\ifnextchar|{\Vtrianglepairpp(#1)}%
{\Vtrianglepairpp(#1)|aalmr|}}%
\def\Vtrianglepairpp(#1)|#2|{\ifnextchar/{\Vtrianglepairppp(#1)|#2|}%
    {\Vtrianglepairppp(#1)|#2|/>`>`>`>`>/}}%
\def\Vtrianglepairppp(#1)|#2|/#3`#4`#5`#6`#7/{%
    \ifnextchar<{\Vtrianglepairpppp(#1)|#2|/#3`#4`#5`#6`#7/}%
    {\Vtrianglepairpppp(#1)|#2|/#3`#4`#5`#6`#7/<\default,\default>}}%
\def\Ctrianglepairp(#1){\ifnextchar|{\Ctrianglepairpp(#1)}%
{\Ctrianglepairpp(#1)|lrmlr|}}%
\def\Ctrianglepairpp(#1)|#2|{\ifnextchar/{\Ctrianglepairppp(#1)|#2|}%
    {\Ctrianglepairppp(#1)|#2|/>`>`>`>`>/}}%
\def\Ctrianglepairppp(#1)|#2|/#3`#4`#5`#6`#7/{%
    \ifnextchar<{\Ctrianglepairpppp(#1)|#2|/#3`#4`#5`#6`#7/}%
    {\Ctrianglepairpppp(#1)|#2|/#3`#4`#5`#6`#7/<\default,\default>}}%
\def\Dtrianglepairp(#1){\ifnextchar|{\Dtrianglepairpp(#1)}%
{\Dtrianglepairpp(#1)|lrmlr|}}%
\def\Dtrianglepairpp(#1)|#2|{\ifnextchar/{\Dtrianglepairppp(#1)|#2|}%
    {\Dtrianglepairppp(#1)|#2|/>`>`>`>`>/}}%
\def\Dtrianglepairppp(#1)|#2|/#3`#4`#5`#6`#7/{%
    \ifnextchar<{\Dtrianglepairpppp(#1)|#2|/#3`#4`#5`#6`#7/}%
    {\Dtrianglepairpppp(#1)|#2|/#3`#4`#5`#6`#7/<\default,\default>}}%
\def\pplace[#1](#2,#3)[#4]{\POS(#2,#3)*+!!<0ex,\axis>!#1{#4}\ignorespaces}%
\def\cplace(#1,#2)[#3]{\POS(#1,#2)*+!!<0ex,\axis>{#3}\ignorespaces}%
\def\pullback#1]#2]{\square#1]\trident#2]\ignorespaces}%
\def\tridentppp|#1#2#3|/#4`#5`#6/<#7,#8>[#9]{%
\def\next[##1;##2`##3`##4]{\def\nodee{##1}\def\labele{##2}%
   \def\labelf{##3}\def\labelg{##4}}%
\next[#9]%
\advance \xpos by -\deltax%
\advance \xpos by -#7\advance \ypos by #8%
\advance\deltax by #7%
\morphism(\xpos,\ypos)|#1|/{#4}/<\deltax,-#8>[\nodee`\nodeb;\labele]%
\advance\deltax by -#7%
\morphism(\xpos,\ypos)|#2|/{#5}/<#7,-#8>[\nodee`\nodea;\labelf]%
\advance\deltay by #8%
\morphism(\xpos,\ypos)|#3|/{#6}/<#7,-\deltay>[\nodee`\nodec;\labelg]%
\ignorespaces}%
\def\trident{\ifnextchar|{\tridentp}{\tridentp|amb|}}%
\def\tridentp|#1|{\ifnextchar/{\tridentpp|#1|}{\tridentpp|#1|/{>}`{>}`{>}/}}%
\def\tridentpp|#1|/#2/{\ifnextchar<{\tridentppp|#1|/#2/}%
  {\tridentppp|#1|/#2/<500,500>}}%
\def\setmorphismwidth#1#2#3#4{%
 \setbox0=\hbox{$#1{\labelstyle#3#3}#2$}#4=\wd0%
 \divide #4 by 2 \divide #4 by \ul%
 \advance #4 by 350 \ratchet{#4}{500}}%
\def\setSquarewidth[#1`#2`#3`#4;#5`#6`#7`#8]{%
 \setmorphismwidth{#1}{#2}{#5}{\topw}%
 \setmorphismwidth{#3}{#4}{#8}{\botw}%
\ratchet{\topw}{\botw}}%
\def\Squarepppp(#1)|#2|/#3/<#4>[#5]{%
 \setSquarewidth[#5]%
 \squarepppp(#1)|#2|/#3/<\topw,#4>[#5]%
\ignorespaces}%
\def\Squarep(#1){\ifnextchar|{\Squarepp(#1)}{\Squarepp(#1)|alrb|}}%
\def\Squarepp(#1)|#2|{\ifnextchar/{\Squareppp(#1)|#2|}%
    {\Squareppp(#1)|#2|/>`>`>`>/}}%
\def\Squareppp(#1)|#2|/#3`#4`#5`#6/{%
    \ifnextchar<{\Squarepppp(#1)|#2|/#3`#4`#5`#6/}%
    {\Squarepppp(#1)|#2|/#3`#4`#5`#6/<\default>}}%
\def\hsquarespppp(#1,#2)|#3|/#4/<#5>[#6;#7]{%
\Xpos=#1\Ypos=#2%
\def\next|##1##2##3##4##5##6##7|{%
 \def\Xa{##1}\def\Xb{##2}\def\Xc{##3}\def\Xd{##4}%
 \def\Xe{##5}\def\Xf{##6}\def\Xg{##7}}%
\next|#3|%
\def\next<##1,##2,##3>{\deltaX=##1\deltaXprime=##2\deltaY=##3}%
\next<#5>%
\def\next[##1`##2`##3`##4`##5`##6]{%
 \def\Nodea{##1}\def\Nodeb{##2}\def\Nodec{##3}%
 \def\Noded{##4}\def\Nodee{##5}\def\Nodef{##6}}%
\next[#6]%
\def\next[##1`##2`##3`##4`##5`##6`##7]{%
 \def\Labela{##1}\def\Labelb{##2}\def\Labelc{##3}\def\Labeld{##4}%
 \def\Labele{##5}\def\Labelf{##6}\def\Labelg{##7}}%
\next[#7]%
\dohsquares/#4/}%
\def\dohsquares/#1`#2`#3`#4`#5`#6`#7/{%
\squarepppp(\Xpos,\Ypos)|\Xa\Xc\Xd\Xf|/#1`#3`#4`#6/<\deltaX,\deltaY>%
 [\Nodea`\Nodeb`\Noded`\Nodee;\Labela`\Labelc`\Labeld`\Labelf]%
 \advance \Xpos by \deltaX%
\squarepppp(\Xpos,\Ypos)|\Xb\Xd\Xe\Xg|/#2``#5`#7/<\deltaXprime,\deltaY>%
[\Nodeb`\Nodec`\Nodee`\Nodef;\Labelb``\Labele`\Labelg]%
\ignorespaces}%
\def\hsquaresp(#1){\ifnextchar|{\hsquarespp(#1)}{\hsquarespp%
(#1)|aalmrbb|}}%
\def\hsquarespp(#1)|#2|{\ifnextchar/{\hsquaresppp(#1)|#2|}%
    {\hsquaresppp(#1)|#2|/>`>`>`>`>`>`>/}}%
\def\hsquaresppp(#1)|#2|/#3/{%
    \ifnextchar<{\hsquarespppp(#1)|#2|/#3/}%
    {\hsquarespppp(#1)|#2|/#3/<\default,\default,\default>}}%
\def\hSquarespppp(#1,#2)|#3|/#4/<#5>[#6;#7]{%
\Xpos=#1\Ypos=#2%
\def\next|##1##2##3##4##5##6##7|{%
 \def\Xa{##1}\def\Xb{##2}\def\Xc{##3}\def\Xd{##4}%
 \def\Xe{##5}\def\Xf{##6}\def\Xg{##7}}%
\next|#3|%
\deltaY=#5%
\def\next[##1`##2`##3`##4`##5`##6]{%
 \def\Nodea{##1}\def\Nodeb{##2}\def\Nodec{##3}%
 \def\Noded{##4}\def\Nodee{##5}\def\Nodef{##6}}%
\next[#6]%
\def\next[##1`##2`##3`##4`##5`##6`##7]{%
 \def\Labela{##1}\def\Labelb{##2}\def\Labelc{##3}\def\Labeld{##4}%
 \def\Labele{##5}\def\Labelf{##6}\def\Labelg{##7}}%
\next[#7]%
\dohSquares/#4/}%
\def\dohSquares/#1`#2`#3`#4`#5`#6`#7/{%
\Squarepppp(\Xpos,\Ypos)|\Xa\Xc\Xd\Xf|/#1`#3`#4`#6/<\deltaY>%
 [\Nodea`\Nodeb`\Noded`\Nodee;\Labela`\Labelc`\Labeld`\Labelf]%
 \advance \Xpos by \topw%
\Squarepppp(\Xpos,\Ypos)|\Xb\Xd\Xe\Xg|/#2``#5`#7/<\deltaY>%
[\Nodeb`\Nodec`\Nodee`\Nodef;\Labelb``\Labele`\Labelg]%
\ignorespaces}%
\def\hSquaresp(#1){\ifnextchar|{\hSquarespp(#1)}{\hSquarespp%
(#1)|aalmrbb|}}%
\def\hSquarespp(#1)|#2|{\ifnextchar/{\hSquaresppp(#1)|#2|}%
    {\hSquaresppp(#1)|#2|/>`>`>`>`>`>`>/}}%
\def\hSquaresppp(#1)|#2|/#3/{%
    \ifnextchar<{\hSquarespppp(#1)|#2|/#3/}%
    {\hSquarespppp(#1)|#2|/#3/<\default>}}%
\def\vsquarespppp(#1,#2)|#3|/#4/<#5>[#6;#7]{%
\Xpos=#1\Ypos=#2%
\def\next|##1##2##3##4##5##6##7|{%
 \def\Xa{##1}\def\Xb{##2}\def\Xc{##3}\def\Xd{##4}%
 \def\Xe{##5}\def\Xf{##6}\def\Xg{##7}}%
\next|#3|%
\def\next<##1,##2,##3>{\deltaX=##1\deltaY=##2\deltaYprime=##3}%
\next<#5>%
\def\next[##1`##2`##3`##4`##5`##6]{%
 \def\Nodea{##1}\def\Nodeb{##2}\def\Nodec{##3}%
 \def\Noded{##4}\def\Nodee{##5}\def\Nodef{##6}}%
\next[#6]%
\def\next[##1`##2`##3`##4`##5`##6`##7]{%
 \def\Labela{##1}\def\Labelb{##2}\def\Labelc{##3}\def\Labeld{##4}%
 \def\Labele{##5}\def\Labelf{##6}\def\Labelg{##7}}%
\next[#7]%
\dovsquares/#4/}%
\def\dovsquares/#1`#2`#3`#4`#5`#6`#7/{%
\squarepppp(\Xpos,\Ypos)|\Xd\Xe\Xf\Xg|/`#5`#6`#7/<\deltaX,\deltaYprime>%
[\Nodec`\Noded`\Nodee`\Nodef;`\Labele`\Labelf`\Labelg]%
 \advance\Ypos by \deltaYprime%
\squarepppp(\Xpos,\Ypos)|\Xa\Xb\Xc\Xd|/#1`#2`#3`#4/<\deltaX,\deltaY>%
 [\Nodea`\Nodeb`\Nodec`\Noded;\Labela`\Labelb`\Labelc`\Labeld]%
\ignorespaces}%
\def\vsquaresp(#1){\ifnextchar|{\vsquarespp(#1)}{\vsquarespp%
(#1)|aalmrbb|}}%
\def\vsquarespp(#1)|#2|{\ifnextchar/{\vsquaresppp(#1)|#2|}%
    {\vsquaresppp(#1)|#2|/>`>`>`>`>`>`>/}}%
\def\vsquaresppp(#1)|#2|/#3/{%
    \ifnextchar<{\vsquarespppp(#1)|#2|/#3/}%
    {\vsquarespppp(#1)|#2|/#3/<\default,\default,\default>}}%
\def\vSquarespppp(#1,#2)|#3|/#4/<#5,#6>[#7;#8]{%
\Xpos=#1\Ypos=#2%
\def\next|##1##2##3##4##5##6##7|{%
 \def\Xa{##1}\def\Xb{##2}\def\Xc{##3}\def\Xd{##4}%
 \def\Xe{##5}\def\Xf{##6}\def\Xg{##7}}%
\next|#3|%
\deltaX=#5%
\deltaY=#6%
\def\next[##1`##2`##3`##4`##5`##6]{%
 \def\Nodea{##1}\def\Nodeb{##2}\def\Nodec{##3}%
 \def\Noded{##4}\def\Nodee{##5}\def\Nodef{##6}}%
\next[#7]%
\def\next[##1`##2`##3`##4`##5`##6`##7]{%
 \def\Labela{##1}\def\Labelb{##2}\def\Labelc{##3}\def\Labeld{##4}%
 \def\Labele{##5}\def\Labelf{##6}\def\Labelg{##7}}%
\next[#8]%
\dovSquares/#4/\ignorespaces}%
\def\dovSquares/#1`#2`#3`#4`#5`#6`#7/{%
\setmorphismwidth{\Nodea}{\Nodeb}{\Labela}{\topw}%
\setmorphismwidth{\Nodec}{\Noded}{\Labeld}{\botw}%
\ratchet{\topw}{\botw}%
\setmorphismwidth{\Nodee}{\Nodef}{\Labelg}{\botw}%
\ratchet{\topw}{\botw}%
\square(\Xpos,\Ypos)|\Xd\Xe\Xf\Xg|/`#5`#6`#7/<\topw,\deltaX>%
 [\Nodec`\Noded`\Nodee`\Nodef;`\Labele`\Labelf`\Labelg]%
\advance \Ypos by \deltaX%
\square(\Xpos,\Ypos)|\Xa\Xb\Xc\Xd|/#1`#2`#3`#4/<\topw,\deltaY>%
 [\Nodea`\Nodeb`\Nodec`\Noded;\Labela`\Labelb`\Labelc`\Labeld]%
}%
\def\vSquaresp(#1){\ifnextchar|{\vSquarespp(#1)}{\vSquarespp%
(#1)|alrmlrb|}}%
\def\vSquarespp(#1)|#2|{\ifnextchar/{\vSquaresppp(#1)|#2|}%
    {\vSquaresppp(#1)|#2|/>`>`>`>`>`>`>/}}%
\def\vSquaresppp(#1)|#2|/#3/{%
    \ifnextchar<{\vSquarespppp(#1)|#2|/#3/}%
    {\vSquarespppp(#1)|#2|/#3/<\default,\default>}}%
\def\osquarepppp(#1)|#2|/#3`#4`#5`#6/<#7>[#8]{\squarepppp%
 (#1)|#2|/#3`#4`#5`#6/<#7>[#8]%
 \let\Nodea\nodea\let\Nodeb\nodeb%
\let\Nodec\nodec\let\Noded\noded\Xpos=\xpos\Ypos=\ypos%
\deltaX=\deltax \deltaY=\deltay \isquare}%
\def\osquarep(#1){\ifnextchar|{\osquarepp(#1)}{\osquarepp(#1)|alrb|}}%
\def\osquarepp(#1)|#2|{\ifnextchar/{\osquareppp(#1)|#2|}%
    {\osquareppp(#1)|#2|/>`>`>`>/}}%
\def\osquareppp(#1)|#2|/#3`#4`#5`#6/{%
    \ifnextchar<{\osquarepppp(#1)|#2|/#3`#4`#5`#6/}%
    {\osquarepppp(#1)|#2|/#3`#4`#5`#6/<1500,1500>}}%
\def\isquarepppp(#1)|#2|/#3`#4`#5`#6/<#7>[#8]{%
 \squarepppp(#1)|#2|/#3`#4`#5`#6/<#7>[#8]%
\ifnextchar|{\cubep}{\cubep|mmmm|}}%
\def\cubep|#1|{\ifnextchar/{\cubepp|#1|}{\cubepp|#1|/>`>`>`>/}}%
\def\isquare{\ifnextchar({\isquarep}{\isquarep(\default,\default)}}%
\def\isquarep(#1){\ifnextchar|{\isquarepp(#1)}{\isquarepp(#1)|alrb|}}%
\def\isquarepp(#1)|#2|{\ifnextchar/{\isquareppp(#1)|#2|}%
    {\isquareppp(#1)|#2|/>`>`>`>/}}%
\def\isquareppp(#1)|#2|/#3`#4`#5`#6/{%
    \ifnextchar<{\isquarepppp(#1)|#2|/#3`#4`#5`#6/}%
    {\isquarepppp(#1)|#2|/#3`#4`#5`#6/<500,500>}}%
\def\cubepp|#1#2#3#4|/#5`#6`#7`#8/[#9]{%
\def\next[##1`##2`##3`##4]{\gdef\Labela{##1}%
\gdef\Labelb{##2}\gdef\Labelc{##3}\gdef\Labeld{##4}}\next[#9]%
\xend\xpos \yend\ypos%
\Xend\xend\advance\Xend by -\Xpos%
\Yend\yend\advance\Yend by -\Ypos%
\domorphism(\Xpos,\Ypos)|#2|/#6/<\Xend,\Yend>[\Nodeb`\nodeb;\Labelb]%
\advance\Xpos by-\deltaX%
\advance\xend by-\deltax%
\Xend\xend\advance\Xend by -\Xpos%
\domorphism(\Xpos,\Ypos)|#1|/#5/<\Xend,\Yend>[\Nodea`\nodea;\Labela]%
\advance\Ypos by-\deltaY%
\advance\yend by-\deltay%
\Yend\yend\advance\Yend by -\Ypos%
\domorphism(\Xpos,\Ypos)|#3|/#7/<\Xend,\Yend>[\Nodec`\nodec;\Labelc]%
\advance\Xpos by\deltaX%
\advance\xend by\deltax%
\Xend\xend\advance\Xend by -\Xpos%
\domorphism(\Xpos,\Ypos)|#4|/#8/<\Xend,\Yend>[\Noded`\noded;\Labeld]%
\ignorespaces}%
\def\setwdth#1#2{\setbox0\hbox{$\labelstyle#1$}\wdth=\wd0%
\setbox0\hbox{$\labelstyle#2$}\ifnum\wdth<\wd0 \wdth=\wd0 \fi}%
\def\topppp/#1/<#2>^#3_#4{\:%
\ifnum#2=0%
   \setwdth{#3}{#4}\deltax=\wdth \divide \deltax by \ul%
   \advance \deltax by \defaultmargin  \ratchet{\deltax}{200}%
\else \deltax #2%
\fi%
\xy\ar@{#1}^{#3}_{#4}(\deltax,0) \endxy%
\:}%
\def\toppp/#1/<#2>^#3{\ifnextchar_{\topppp/#1/<#2>^{#3}}{\topppp/#1/<#2>^{#3}_{}}}%
\def\topp/#1/<#2>{\ifnextchar^{\toppp/#1/<#2>}{\toppp/#1/<#2>^{}}}%
\def\toop/#1/{\ifnextchar<{\topp/#1/}{\topp/#1/<0>}}%
\def\to{\ifnextchar/{\toop}{\toop/>/}}%
\def\twopppp/#1`#2/<#3>^#4_#5{\:%
\ifnum0=#3%
  \setwdth{#4}{#5}\deltax=\wdth \divide \deltax by \ul \advance \deltax%
  by \defaultmargin \ratchet{\deltax}{200}%
\else \deltax#3 \fi%
\xy\ar@{#1}@<2.5pt>^{#4}(\deltax,0)%
\ar@{#2}@<-2.5pt>_{#5}(\deltax,0)\endxy\:}%
\def\twoppp/#1`#2/<#3>^#4{\ifnextchar_{\twopppp/#1`#2/<#3>^{#4}}%
  {\twopppp/#1`#2/<#3>^{#4}_{}}}%
\def\twopp/#1`#2/<#3>{\ifnextchar^{\twoppp/#1`#2/<#3>}{\twoppp/#1`#2/<#3>^{}}}%
\def\twop/#1`#2/{\ifnextchar<{\twopp/#1`#2/}{\twopp/#1`#2/<0>}}%
\def\threeppppp/#1`#2`#3/<#4>^#5|#6_#7{\:%
\ifnum0=#4%
\setbox0\hbox{$\labelstyle#5$}\wdth=\wd0%
\setbox0\hbox{$\labelstyle#6$}\ifnum\wdth<\wd0 \wdth=\wd0 \fi%
\setbox0\hbox{$\labelstyle#7$}\ifnum\wdth<\wd0 \wdth=\wd0 \fi%
\deltax=\wdth \divide \deltax by \ul \advance \deltax by%
\defaultmargin \ratchet{\deltax}{300}%
\else\deltax#4 \fi%
    \xy \ifnum\wd0=0 \ar@{#2}(\deltax,0)%
    \else \ar@{#2}|{#6}(\deltax,0)\fi%
\ar@{#1}@<4.5pt>^{#5}(\deltax,0)%
\ar@{#3}@<-4.5pt>_{#7}(\deltax,0)\endxy\:}%
\def\threepppp/#1`#2`#3/<#4>^#5|#6{\ifnextchar_{\threeppppp%
  /#1`#2`#3/<#4>^{#5}|{#6}}{\threeppppp/#1`#2`#3/<#4>^{#5}|{#6}_{}}}%
\def\threeppp/#1`#2`#3/<#4>^#5{\ifnextchar|{\threepppp%
  /#1`#2`#3/<#4>^{#5}}{\threepppp/#1`#2`#3/<#4>^{#5}|{}}}%
\def\threepp/#1`#2`#3/<#4>{\ifnextchar^{\threeppp/#1`#2`#3/<#4>}%
  {\threeppp/#1`#2`#3/<#4>^{}}}%
\def\threep/#1`#2`#3/{\ifnextchar<{\threepp/#1`#2`#3/}%
  {\threepp/#1`#2`#3/<0>}}%
\def\twoar(#1,#2){{%
 \scalefactor{0.1}%
 \deltax#1\deltay#2%
 \deltaX=\ifnum\deltax<0-\fi\deltax%
 \deltaY=\ifnum\deltay<0-\fi\deltay%
 \Xend\deltax \multiply \Xend by \deltax%
 \Yend\deltay \multiply \Yend by \deltay%
 \advance\Xend by \Yend \multiply \Xend by 3%
 \ifnum \deltaX > \deltaY%
    \multiply \deltaX by 3 \advance \deltaX by \deltaY%
 \else%
    \multiply \deltaY by 3 \advance \deltaX by \deltaY%
 \fi%
 \multiply\deltax by 500%
 \multiply\deltay by 500%
 \xpos\deltax \multiply \xpos by 3 \divide\xpos by \deltaX%
 \Xpos\deltax \multiply \Xpos by \deltaX \divide \Xpos by \Xend%
 \advance \xpos by \Xpos%
 \ypos\deltay \multiply \ypos by 3 \divide\ypos by \deltaX%
 \Ypos\deltay \multiply \Ypos by \deltaX \divide \Ypos by \Xend%
 \advance \ypos by \Ypos%
 \xy \ar@{=>}(\xpos,\ypos) \endxy%
}\ignorespaces}%
\def\iiixiiipppppp(#1,#2)|#3|/#4/<#5>#6<#7>[#8;#9]{%
 \xpos#1\ypos#2\relax%
 \def\next|##1##2##3##4##5##6##7|{\def\xa{##1}\def\xb{##2}%
 \def\xc{##3}\def\xd{##4}\def\xe{##5}\def\xf{##6}\nextt|##7|}%
 \def\nextt|##1##2##3##4##5##6|{\def\xg{##1}\def\xh{##2}%
 \def\xi{##3}\def\xj{##4}\def\xk{##5}\def\xl{##6}}%
 \next|#3|%
 \def\next<##1,##2>{\deltax##1\deltay##2}%
 \next<#5>%
 \def\next<##1,##2>{\deltaX##1\deltaY##2}%
 \next<#7>%
 \def\next##1{\topw##1\relax%
 \ifodd\topw \def\zl{}\else\def\zl{\relax}\fi \divide\topw by 2
 \ifodd\topw \def\zk{}\else\def\zk{\relax}\fi \divide\topw by 2
 \ifodd\topw \def\zj{}\else\def\zj{\relax}\fi \divide\topw by 2
 \ifodd\topw \def\zi{}\else\def\zi{\relax}\fi \divide\topw by 2
 \ifodd\topw \def\zh{}\else\def\zh{\relax}\fi \divide\topw by 2
 \ifodd\topw \def\zg{}\else\def\zg{\relax}\fi \divide\topw by 2
 \ifodd\topw \def\zf{}\else\def\zf{\relax}\fi \divide\topw by 2
 \ifodd\topw \def\ze{}\else\def\ze{\relax}\fi \divide\topw by 2
 \ifodd\topw \def\zd{}\else\def\zd{\relax}\fi \divide\topw by 2
 \ifodd\topw \def\zc{}\else\def\zc{\relax}\fi \divide\topw by 2
 \ifodd\topw \def\zb{}\else\def\zb{\relax}\fi \divide\topw by 2
 \ifodd\topw \def\za{}\else\def\za{\relax}\fi}%
 \next{#6}%
 \def\next[##1`##2`##3`##4`##5`##6`##7`##8`##9]{%
 \def\nodea{##1}\def\nodeb{##2}\def\nodec{##3}%
 \def\noded{##4}\def\nodee{##5}\def\nodef{##6}%
 \def\nodeg{##7}\def\nodeh{##8}\def\nodei{##9}}%
 \next[#8]%
 \def\next[##1`##2`##3`##4`##5`##6`##7]{%
 \def\labela{##1}\def\labelb{##2}\def\labelc{##3}%
 \def\labeld{##4}\def\labele{##5}\def\labelf{##6}\nextt[##7]}%
 \def\nextt[##1`##2`##3`##4`##5`##6]{%
 \def\labelg{##1}\def\labelh{##2}\def\labeli{##3}%
 \def\labelj{##4}\def\labelk{##5}\def\labell{##6}}%
 \next[#9]%
 \def\next/##1`##2`##3`##4`##5`##6`##7/{%
\morphism(\xpos,\ypos)|\xe|/{##5}/<\deltax,0>[\nodeg`\nodeh;\labele]%
 \ifx\zi\empty\relax \morphism(\xpos,\ypos)||/<-/<-\deltaX,0>[\nodeg`0;]\fi%
 \ifx\zd\empty\relax \morphism(\xpos,\ypos)||<0,-\deltaY>[\nodeg`0;]\fi%
 \advance\xpos by \deltax%
 \morphism(\xpos,\ypos)|\xf|/{##6}/<\deltax,0>[\nodeh`\nodei;\labelf]%
 \ifx\ze\empty\relax \morphism(\xpos,\ypos)||<0,-\deltaY>[\nodeh`0;]\fi%
 \advance\xpos by \deltax%
 \ifx\zf\empty\relax \morphism(\xpos,\ypos)||<0,-\deltaY>[\nodei`0;]\fi%
 \ifx\zl\empty\relax \morphism(\xpos,\ypos)||<\deltaX,0>[\nodei`0;]\fi%
 \advance\ypos by \deltay%
 \ifx\zk\empty\relax \morphism(\xpos,\ypos)||<\deltaX,0>[\nodef`0;]\fi%
 \advance\xpos by -\deltax%
 \morphism(\xpos,\ypos)|\xd|/{##4}/<\deltax,0>[\nodee`\nodef;\labeld]%
 \advance\xpos by -\deltax%
 \morphism(\xpos,\ypos)|\xc|/{##3}/<\deltax,0>[\noded`\nodee;\labelc]%
 \ifx\zh\empty\relax \morphism(\xpos,\ypos)||/<-/<-\deltaX,0>[\noded`0;]\fi%
 \advance\ypos by \deltay%
 \morphism(\xpos,\ypos)|\xa|/{##1}/<\deltax,0>[\nodea`\nodeb;\labela]%
 \ifx\zg\empty\relax \morphism(\xpos,\ypos)||/<-/<-\deltaX,0>[\nodea`0;]\fi%
 \ifx\za\empty\relax \morphism(\xpos,\ypos)||/<-/<0,\deltaY>[\nodea`0;]\fi%
 \advance\xpos by \deltax%
 \morphism(\xpos,\ypos)|\xb|/{##2}/<\deltax,0>[\nodeb`\nodec;\labelb]%
 \ifx\zb\empty\relax \morphism(\xpos,\ypos)||/<-/<0,\deltaY>[\nodeb`0;]\fi%
 \advance\xpos by \deltax%
 \ifx\zc\empty\relax \morphism(\xpos,\ypos)||/<-/<0,\deltaY>[\nodec`0;]\fi%
 \ifx\zj\empty\relax \morphism(\xpos,\ypos)||<\deltaX,0>[\nodec`0;]\fi%
 \nextt/##7/}%
 \def\nextt/##1`##2`##3`##4`##5`##6/{%
 \morphism(\xpos,\ypos)|\xi|/{##3}/<0,-\deltay>[\nodec`\nodef;\labeli]%
 \advance\xpos by -\deltax%
 \morphism(\xpos,\ypos)|\xh|/{##2}/<0,-\deltay>[\nodeb`\nodee;\labelh]%
 \advance\xpos by -\deltax%
 \morphism(\xpos,\ypos)|\xg|/{##1}/<0,-\deltay>[\nodea`\noded;\labelg]%
 \advance\ypos by -\deltay%
 \morphism(\xpos,\ypos)|\xj|/{##4}/<0,-\deltay>[\noded`\nodeg;\labelj]%
 \advance\xpos by \deltax%
 \morphism(\xpos,\ypos)|\xk|/{##5}/<0,-\deltay>[\nodee`\nodeh;\labelk]%
 \advance\xpos by \deltax%
 \morphism(\xpos,\ypos)|\xl|/{##6}/<0,-\deltay>[\nodef`\nodei;\labell]}%
 \next/#4/\ignorespaces}%
\def\iiixiiip(#1){\ifnextchar|{\iiixiiipp(#1)}%
  {\iiixiiipp(#1)|aammbblmrlmr|}}%
\def\iiixiiipp(#1)|#2|{\ifnextchar/{\iiixiiippp(#1)|#2|}%
    {\iiixiiippp(#1)|#2|/>`>`>`>`>`>`>`>`>`>`>`>/}}%
\def\iiixiiippp(#1)|#2|/#3/{%
    \ifnextchar<{\iiixiiipppp(#1)|#2|/#3/}%
    {\iiixiiipppp(#1)|#2|/#3/<\default,\default>}}%
\def\iiixiiipppp(#1)|#2|/#3/<#4>{\ifnextchar[{\iiixiiippppp(#1)|#2|/#3/%
   <#4>0<0,0>}{\iiixiiippppp(#1)|#2|/#3/<#4>}}%
\def\iiixiiippppp(#1)|#2|/#3/<#4>#5{\ifnextchar<%
   {\iiixiiipppppp(#1)|#2|/#3/<#4>{#5}}%
   {\iiixiiipppppp(#1)|#2|/#3/<#4>{#5}<400,400>}}%
\def\iiixiipppppp(#1,#2)|#3|/#4/<#5>#6<#7>[#8;#9]{%
 \xpos#1\ypos#2\relax%
 \def\next|##1##2##3##4##5##6##7|{\def\xa{##1}\def\xb{##2}%
 \def\xc{##3}\def\xd{##4}\def\xe{##5}\def\xf{##6}\def\xg{##7}}%
 \next|#3|%
 \def\next<##1,##2>{\deltax##1\deltay##2}%
 \next<#5>%
 \deltaX#7
 \topw#6
 \def\next{%
 \ifodd\topw \def\za{}\else\def\za{\relax}\fi \divide\topw by 2
 \ifodd\topw \def\zb{}\else\def\zb{\relax}\fi \divide\topw by 2
 \ifodd\topw \def\zc{}\else\def\zc{\relax}\fi \divide\topw by 2
 \ifodd\topw \def\zd{}\else\def\zd{\relax}\fi}%
 \next%
 \def\next[##1`##2`##3`##4`##5`##6]{%
 \def\nodea{##1}\def\nodeb{##2}\def\nodec{##3}%
 \def\noded{##4}\def\nodee{##5}\def\nodef{##6}}%
 \next[#8]%
 \def\next[##1`##2`##3`##4`##5`##6`##7]{%
 \def\labela{##1}\def\labelb{##2}\def\labelc{##3}%
 \def\labeld{##4}\def\labele{##5}\def\labelf{##6}\def\labelg{##7}}%
 \next[#9]%
 \def\next/##1`##2`##3`##4`##5`##6`##7/{%
 \ifx\zc\empty\relax\morphism(\xpos,\ypos)<\deltaX,0>[0`\noded;]\fi%
 \advance\xpos by\deltaX%
 \morphism(\xpos,\ypos)|\xc|/##3/<\deltax,0>[\noded`\nodee;\labelc]%
 \advance\xpos by \deltax%
 \morphism(\xpos,\ypos)|\xd|/##4/<\deltax,0>[\nodee`\nodef;\labeld]%
 \advance\xpos by \deltax%
 \ifx\zd\empty\relax  \morphism(\xpos,\ypos)<\deltaX,0>[\nodef`0;]\fi%
 \advance\xpos by -\deltaX  \advance\xpos by -\deltax
 \advance\xpos by -\deltax  \advance\ypos by \deltay
 \ifx\za\empty\relax\morphism(\xpos,\ypos)<\deltaX,0>[0`\nodea;]\fi%
 \advance\xpos by\deltaX%
 \morphism(\xpos,\ypos)|\xa|/##1/<\deltax,0>[\nodea`\nodeb;\labela]%
 \morphism(\xpos,\ypos)|\xe|/##5/<0,-\deltay>[\nodea`\noded;\labele]%
 \advance\xpos by \deltax%
 \morphism(\xpos,\ypos)|\xb|/##2/<\deltax,0>[\nodeb`\nodec;\labelb]%
 \morphism(\xpos,\ypos)|\xf|/##6/<0,-\deltay>[\nodeb`\nodee;\labelf]%
 \advance\xpos by \deltax%
 \morphism(\xpos,\ypos)|\xg|/##7/<0,-\deltay>[\nodec`\nodef;\labelg]%
 \ifx\zb\empty\relax \morphism(\xpos,\ypos)<\deltaX,0>[\nodec`0;]\fi}%
 \next/#4/\ignorespaces}%
\def\iiixiip(#1){\ifnextchar|{\iiixiipp(#1)}%
  {\iiixiipp(#1)|aabblmr|}}%
\def\iiixiipp(#1)|#2|{\ifnextchar/{\iiixiippp(#1)|#2|}%
    {\iiixiippp(#1)|#2|/>`>`>`>`>`>`>/}}%
\def\iiixiippp(#1)|#2|/#3/{%
    \ifnextchar<{\iiixiipppp(#1)|#2|/#3/}%
    {\iiixiipppp(#1)|#2|/#3/<\default,\default>}}%
\def\iiixiipppp(#1)|#2|/#3/<#4>{\ifnextchar[{\iiixiippppp(#1)|#2|/#3/%
   <#4>{0}<0>}{\iiixiippppp(#1)|#2|/#3/<#4>}}%
\def\iiixiippppp(#1)|#2|/#3/<#4>#5{\ifnextchar<%
   {\iiixiipppppp(#1)|#2|/#3/<#4>{#5}}%
   {\iiixiipppppp(#1)|#2|/#3/<#4>{#5}<400>}}%
\def\node#1(#2,#3)[#4]{%
\expandafter\gdef\csname x@#1\endcsname{#2}%
\expandafter\gdef\csname y@#1\endcsname{#3}%
\expandafter\gdef\csname ob@#1\endcsname{#4}%
\ignorespaces}%
\def\arrowp|#1|{\ifnextchar/{\arrowpp|#1|}{\arrowpp|#1|/>/}}%
\def\arrowpp|#1|/#2/[#3`#4;#5]{%
\xfinish=\csname x@#4\endcsname%
\yfinish=\csname y@#4\endcsname%
\advance\xfinish by -\csname x@#3\endcsname%
\advance\yfinish by -\csname y@#3\endcsname%
\morphism(\csname x@#3\endcsname,\csname y@#3\endcsname)|#1|/{#2}/%
<\xfinish,\yfinish>[\csname ob@#3\endcsname`\csname ob@#4\endcsname;#5]%
}%
\def\Loop(#1,#2)#3(#4,#5){\POS(#1,#2)*+!!<0ex,\axis>{#3}\ar@(#4,#5)}%
\def\iloop#1(#2,#3){\xy\Loop(0,0)#1(#2,#3)\endxy}%
     \let \PATHafterPOS\PATHafterPOS@default%
     \let \arsavedPATHafterPOS@@\relax%
     \let\afterar@@\relax%
\xydef@\endxyobj{\if\inxy@\else\xyerror@{Unexpected \string\endxy}{}\fi%
>  \relax%
>   \dimen@=\Y@max \advance\dimen@-\Y@min%
>   \ifdim\dimen@<\z@ \dimen@=\z@ \Y@min=\z@ \Y@max=\z@ \fi%
>   \dimen@=\X@max \advance\dimen@-\X@min%
>   \ifdim\dimen@<\z@ \dimen@=\z@ \X@min=\z@ \X@max=\z@ \fi%
>   \edef\tmp@{\egroup%
>     \setboxz@h{\kern-\the\X@min \boxz@}%
>     \ht\z@=\the\Y@max \dp\z@=-\the\Y@min \wdz@=\the\dimen@%
>     \noexpand\maybeunraise@ \raise\dimen@\boxz@%
>     \noexpand\recoverXyStyle@ \egroup \noexpand\xy@end%
>     \U@c=\the\Y@max \advance\U@c-\the\Y@c%
>     \D@c=-\the\Y@min \advance\D@c\the\Y@c%
>     \L@c=-\the\X@min  \advance\L@c\the\X@c%
>     \R@c=\the\X@max  \advance\R@c-\the\X@c%
>    }\tmp@}%
\gdef\xymerge@MinMax{}%
\xydef@\twocell{\hbox\bgroup\xysave@MinMax\@twocell}%
\xydef@\uppertwocell{\hbox\bgroup\xysave@MinMax\@uppertwocell}%
\xydef@\lowertwocell{\hbox\bgroup\xysave@MinMax\@lowertwocell}%
\xydef@\compositemap{\hbox\bgroup\xysave@MinMax\@compositemap}%
\xydef@\xysave@MinMax{\xdef\xymerge@MinMax{%
   \noexpand\ifdim\X@max<\the\X@max \X@max=\the\X@max\noexpand\fi%
   \noexpand\ifdim\X@min>\the\X@min \X@min=\the\X@min\noexpand\fi%
   \noexpand\ifdim\Y@max<\the\Y@max \Y@max=\the\Y@max\noexpand\fi%
   \noexpand\ifdim\Y@min>\the\Y@min \Y@min=\the\Y@min\noexpand\fi%
  }}%
\xydef@\drop@Twocell{\boxz@ \xymerge@MinMax}%
\xydef@\twocell@DONE{%
  \edef\tmp@{\egroup%
   \X@min=\the\X@min \X@max=\the\X@max%
   \Y@min=\the\Y@min \Y@max=\the\Y@max}\tmp@%
  \L@c=\X@c \advance\L@c-\X@min \R@c=\X@max \advance\R@c-\X@c%
  \D@c=\Y@c \advance\D@c-\Y@min \U@c=\Y@max \advance\U@c-\Y@c%
  \ht\z@=\U@c \dp\z@=\D@c \dimen@=\L@c \advance\dimen@\R@c \wdz@=\dimen@%
  \computeLeftUpness@%
  \setboxz@h{\kern-\X@p \raise-\Y@c\boxz@ }%
  \dimen@=\L@c \advance\dimen@\R@c \wdz@=\dimen@ \ht\z@=\U@c \dp\z@=\D@c%
  \Edge@c={\rectangleEdge}\Invisible@false \Hidden@false%
  \edef\Drop@@{\noexpand\drop@Twocell%
   \noexpand\def\noexpand\Leftness@{\Leftness@}%
   \noexpand\def\noexpand\Upness@{\Upness@}}%
  \edef\Connect@@{\noexpand\connect@Twocell%
   \noexpand\ifdim\X@max<\the\X@max \X@max=\the\X@max\noexpand\fi%
   \noexpand\ifdim\X@min>\the\X@min \X@min=\the\X@min\noexpand\fi%
   \noexpand\ifdim\Y@max<\the\Y@max \Y@max=\the\Y@max\noexpand\fi%
   \noexpand\ifdim\Y@min>\the\Y@min \Y@min=\the\Y@min\noexpand\fi }%
  \xymerge@MinMax%
}%
\newcommand{\judge}[2]{#1\;\vdash\;#2}
\newcommand{\groupoids}{\textnormal{\textbf{Gpd}}}
\newcommand{\topcat}{\textnormal{\textbf{Top}}}
\newcommand{\ssets}{\textnormal{\textbf{SSet}}}
\newcommand{\id}[1]{\textnormal{Id}_{#1}}
\newcommand{\type}{\operatorname{type}}
\newcommand{\app}{\operatorname{app}}
\renewcommand{\implies}{\Rightarrow}
\newcommand{\iso}{\cong}
\newtheorem{theorem}{Theorem}[section]
\newtheorem{proposition}[theorem]{Proposition}
\newtheorem{corollary}[theorem]{Corollary}
\theoremstyle{definition}
\theoremstyle{remark}
\begin{document}
\author{Steve Awodey and Michael A. Warren}
\email{awodey@cmu.edu}
\email{mwarren@andrew.cmu.edu}
\date{\today}
\address{Department of Philosophy\\Carnegie Mellon
  University\\Pittsburgh, PA\\USA 15213}

\title[identity types]{Homotopy Theoretic Models of Identity Types}
\maketitle
\section{Introduction}

\noindent Quillen \cite{Quillen:HA} introduced model categories as an
abstract framework for homotopy theory which would apply to a wide
range of mathematical settings.  By all accounts this program has been
a success and ---  as, e.g., the work of Voevodsky on the homotopy theory of
schemes \cite{Morel:A1HTS} or the work of Joyal
\cite{Joyal:QCKC,Joyal:NOQC} and Lurie \cite{Lurie:HTT} on
quasicategories seems to indicate --- it will likely continue to
facilitate mathematical advances.  In this paper we present a
novel connection between model categories and mathematical logic, inspired by
the groupoid model of (intensional) Martin-L\"{o}f type theory
\cite{MartinLof:ITT} due to Hofmann and Streicher \cite{Hofmann:GITT}.  In
particular, we show that a form of Martin-L\"{o}f type
theory can be soundly modelled in any model
category.  This result indicates moreover that any model category
has an associated ``internal language'' which is itself a form of
Martin-L\"{o}f type theory.  This suggests applications both to
type theory and to homotopy theory.  Because Martin-L\"{o}f type
theory is, in one form or another, the theoretical basis for many of
the computer proof assistants currently in use, such as \emph{Coq} and
\emph{Agda} (cf.~\cite{Bertot:ITPPD} and \cite{Coquand:STT}),
this promise of applications is of a practical, as well as theoretical, nature.

The present paper provides a precise indication of this connection
between homotopy theory and logic; a more detailed discussion
of these and further results will be given in \cite{Warren:PhD}.

\section{Type Theory}\label{section:type_theory}

Type theory is concerned with (at least) two basic kinds of
entities: \emph{types} and \emph{terms}.  Types are
written as $A,B,\ldots$ and terms as $a,b,\ldots$.  Every
term has a unique type and we write $a:A$ to indicate that $a$ is a
term of type $A$.  Types can be thought of as sets and terms as
elements of sets or, respectively, as objects of a category and global sections
thereof.  Alternatively, under an interpretation known as the
Curry-Howard correspondence (cf.~\cite{Nordstrom:PMLTT}), a type $A$ can be
regarded as a proposition and a term $a:A$ as a proof of $A$.  

The \emph{simply typed $\lambda$-calculus} is the type theory obtained
by admitting the construction of products $(A\times B)$ and
exponentials (function
spaces) $(A\rightarrow B)$ of types $A$ and $B$.  Under the
Curry-Howard correspondence, the simply typed $\lambda$-calculus
describes the behavior of proofs in propositional (intuitionistic) logic:
$(A\times B)$ is the conjunction $(A\wedge B)$ and $(A\rightarrow
B)$ is the implication $(A\implies B)$.
In categorical terms, the simply typed $\lambda$-calculus corresponds
to cartesian closed categories in the evident way.

The principal innovation of Martin-L\"{o}f's \emph{dependent type theory}
over the simply typed $\lambda$-calculus is that types are allowed to
\emph{depend on} or ``vary over'' other types, thereby yielding a
more complex and expressive theory.  The meaning of type
dependence is that, when $A$ is a given type, it is possible for a family
$(B_{x})_{x:A}$ of types to occur indexed by $A$.  The theory also allows
families of types which are themselves indexed by families of types,
and so forth.  The basic operations of the theory then correspond to
indexed sums and products.  These operations, together with type
dependence, allow us to regard dependent type theory as an extension
of the Curry-Howard correspondence to first-order (intuitionistic)
logic.  Similarly, the kinds of categories corresponding to dependent
type theory are locally cartesian closed categories.

We now present the syntax of Martin-L\"{o}f type theory in more
detail together with an interpretation, due to Seely
\cite{Seely:LCCCTT}, in locally cartesian closed categories.  This
interpretation is ``non-split'' in the sense that it does not model
substitution on the nose, but only up to canonical natural
isomorphism, due to the pseudo-functoriality introduced by a choice of
pullbacks (cf.~\cite{Curien:SUI} and \cite{Hofmann:OITTLCCC}).
Because we are mostly interested in type theory as an internal
language for categories this conflation of isomorphic objects will not
concern us here.  The homotopy theoretical interpretation will be
given in the Section \ref{section:models}.

\subsection{Forms of judgement}

The syntax of type theory is given by first indicating four ``forms  
of judgement''.  These are the basic kinds of statement which can be
formally made in the theory.  The first form of judgement is the type
declaration $\judge{}{A:\type}$ which says that $A$ is a type.  In a
fixed locally cartesian closed category $\mathcal{C}$ such a judgement
is interpreted as an object $A$ of $\mathcal{C}$.  As mentioned above,
when $A$ is a type it is possible to consider
$A$-indexed families of types.  That $B(x)$ is an $A$-indexed family
of types is indicated by the following form of judgement
\begin{align}\label{eq:type_dep}  
  \judge{x:A}{B(x):\type}.  
\end{align}
Such a judgment is interpreted as an arrow $f:B\to<150>A$ 
with codomain $A$ following the usual categorical treatment of indexed
families.

In (\ref{eq:type_dep}) the part $x:A$
to the left of the turnstile $\;\vdash\;$ is called the \emph{context} of the
judgement.  More generally, a list of variable declarations
\begin{align*}
  x_{0}:A_{0},x_{1}:A_{1},\ldots,x_{n}:A_{n}
\end{align*}
is a context whenever the judgements $\judge{}{A_{0}:\type}$ and 
\begin{align*}
  \judge{x_{0}:A_{0},\ldots,x_{m}:A_{m}}{A_{m+1}:\type}
\end{align*}
are derivable for $0\leq m<n$.  Upper-case Greek letters
$\Gamma,\Delta,\ldots$ are reserved as names for contexts.  Contexts
are interpreted in the natural way as chains 
\begin{align}\label{eq:chain}
  A_{n}\to<150>A_{n-1}\to<150>\cdots\to<150>A_{0}
\end{align}
of arrows.  The empty context is interpreted as the terminal object.

In addition to judgements of the form $\judge{\Gamma}{A:\type}$ there
are also judgements of the form
\begin{align}\label{eq:term_in_context}
  \judge{\Gamma}{a:A},
\end{align}
which state that $a$ is a \emph{term} of type $A$ in the context
$\Gamma$.  In the empty context a term $a:A$ is interpreted as a
global section $1\to<150>A$ of the object $A$.  Similarly, when $\Gamma$ is
interpreted as a chain of arrows of the form (\ref{eq:chain}) the
judgement (\ref{eq:term_in_context}) is interpreted as a section $a:A_{n}\to<150>A$ of the interpretation $A\to<150>A_{n}$ of $\judge{\Gamma}{A:\type}$.

Finally, there are also forms of judgement governing \emph{definitional
  equality} of types and terms as follows:
\begin{align*}
  &\judge{\Gamma}{A=B:\type},\\
  &\judge{\Gamma}{a=b:A},
\end{align*}
which are interpreted as identities in $\mathcal{C}$.  Henceforth,
when no confusion will result, explicit mention of contexts
will be elided.

\subsection{Dependent sums and products}

Given an $A$-indexed family of types $B(x)$ the \emph{dependent sum}
$\Sigma_{x:A}.B(x)$ and the \emph{dependent product} $\Pi_{x:A}.B(x)$
can be formed.  This is usually stated as the following
\emph{formation rules} 
\begin{align*}
  \begin{prooftree}
    \judge{x:A}{B(x)}
    \justifies
    \judge{}{\Sigma_{x:A}.B(x):\type}
    \using{\Sigma\text{ form.}}
  \end{prooftree}
  \qquad\text{ and }\qquad
  \begin{prooftree}
    \judge{x:A}{B(x)}
    \justifies
    \judge{}{\Pi_{x:A}.B(x):\type}
    \using{\Pi\text{ form.}}
  \end{prooftree}
\end{align*}
Under the Curry-Howard correspondence, dependent sums correspond to
existential quantifiers and dependent products correspond to universal
quantifiers.  The behavior of these types is specified by
\emph{introduction}, \emph{elimination} and \emph{conversion} rules,
which can be thought of either in terms of manipulation of indexed
families or their logical significance.
For example, the introduction rule for $\Pi_{x:A}.B(x)$ is stated as
\begin{align*}
  \begin{prooftree}
    \judge{x:A}{f(x):B(x)}
    \justifies
    \judge{}{\lambda_{x:A}.f(x):\Pi_{x:A}.B(x)}
    \using{\Pi\text{ intro.}}
  \end{prooftree}
\end{align*}
which states that if $f$ is family of terms $f(x):B(x)$, then there is
a term $\lambda_{x:A}.f(x)$ of type $\Pi_{x:A}.B(x)$.  Similarly, the elimination rule
\begin{align*}
  \begin{prooftree}
    \judge{}{g:\Pi_{x:A}.B(x)}\quad\judge{}{a:A}
    \justifies
    \judge{}{\app(g,a):B(a)}
    \using{\Pi\text{ elim.}}
  \end{prooftree}
\end{align*}
corresponds to the application of an element $g$ of the indexed
product to $a:A$.  Finally, the following conversion rule for
dependent products states that the application term $\app(g,a)$
behaves correctly when $g$ is itself of the form $\lambda_{x:A}.f(x)$:
\begin{align*}
  \begin{prooftree}
    \judge{x:A}{f(x):B(x)}\quad\judge{}{a:A}
    \justifies
    \judge{}{\app\bigl(\lambda_{x:A}.f(x),a\bigr)\;=\;f(a):B(a)}
    \using{\Pi\text{ conv}.}
  \end{prooftree}
\end{align*}
The dependent sums $\Sigma_{x:A}.B(x)$ are likewise
required to obey suitable introduction, elimination and conversion
rules.  When types $A$ and $B$ do not depend on any variables, the
usual product type $(A\times B)$ and exponential type $(A\rightarrow B)$ from the simply typed $\lambda$-calculus
are recovered as $\Sigma_{x:A}.B$ and $\Pi_{x:A}.B$, respectively.

In a locally cartesian closed category $\mathcal{C}$, the dependent
products and sums are interpreted in the natural way using,
respectively, the right and left adjoints to the pullback functors.

\subsection{Identity types}

In addition to dependent sums and products it is required that 
for each type $A$ and terms $a,b:A$,
there exists a type $\id{A}(a,b)$ called the \emph{identity type}
which provides the only explicit form of type dependence in the theory
considered here.  I.e., unlike dependent products and sums, the formation rule
for the identity type introduces new type dependencies:
\begin{align}
  \label{eq:id_form}
  \begin{prooftree}
    \judge{}{a:A}\qquad\judge{}{b:A}
    \justifies
    \judge{}{\id{A}(a,b):\type}
    \using{\id{}\text{ form.}}
  \end{prooftree}
\end{align}
Under the Curry-Howard correspondence, this type is regarded
as the proposition which states that $a$ and $b$ denote identical
proofs of the proposition $A$.  The introduction rule
\begin{align}
  \label{eq:id_intro}
  \begin{prooftree}
    \judge{}{a:A}
    \justifies 
    \judge{}{r_{A}(a):\id{A}(a,a)}
    \using{\id{}\text{ intro.}}
  \end{prooftree}
\end{align}
states that given a term $a:A$ there is always a witness $r_{A}(a)$ to
the proposition that $a$ is identical to itself.  We call $r_{A}(a)$
the \emph{reflexivity term}.  On the other hand, the distinctive elimination rule 
\begin{align}
  \label{eq:id_elim}
  \begin{prooftree}
    \[
    \judge{x:A,y:A,z:\id{A}(x,y)}{D(x,y,z):\type}
    \thickness.0em
    \justifies
    \judge{}{p:\id{A}(a,b)}
    \qquad\qquad
    \judge{x:A}{d(x):D\bigl(x,x,r_{A}(x)\bigr)}
    \]
    \justifies
    \judge{}{J_{A,D}(d,a,b,p):D(a,b,p)}
    \using{\id{}\textnormal{ elim.}}
  \end{prooftree}
\end{align}
can be recognized as a form of Leibniz's law.  Finally, the conversion rule
\begin{align}
  \label{eq:id_conv}
   \begin{prooftree}
     \[
     \judge{x:A,y:A,z:\id{A}(x,y)}{D(x,y,z):\type}
     \thickness.0em
     \justifies
     \judge{}{a:A}
     \qquad\qquad
     \judge{x:A}{d(x):D\bigl(x,x,r_{A}(x)\bigr)}
     \]
     \justifies
     \judge{}{J_{A,D}(d,a,a,r_{A}(a))=d(a):D(a,a,r_{A}(a))}
     \using{\id{}\textnormal{ conv.}}
   \end{prooftree}
\end{align}
indicates that the elimination term is equal to
$d(a)$ when $p$ is the reflexivity term.

\subsection{Locally cartesian closed categories are extensional}

A model of Martin-L\"{o}f type theory is
\emph{extensional} if the following reflection
rule is satisfied:
\begin{align}\label{eq:reflection}
  \begin{prooftree}
    \judge{}{p:\id{A}(a,b)}
    \justifies
    \judge{}{a=b:A}.
    \using{\id{}\text{ refl.}}
  \end{prooftree}
\end{align}
I.e., the identity type
$\id{A}(a,b)$ captures no more information than whether or not $a$ and
$b$ are definitionally equal.  Although type checking is decidable in
the intensional theory, it fails to be in the extensional theory
obtained by adding (\ref{eq:reflection}) as a rule governing identity types.
This fact is the principal motivation for studying intensional rather
than extensional type theories (cf.~\cite{Streicher:IIITT} for a more
thorough discussion of the phenomenon of intensionality and the
difference between intensional and extensional forms of the theory).
Under the general interpretation in
locally cartesian closed categories
sketched above the reflection rule is always valid.
\begin{proposition}
  In the standard interpretation given above, every locally cartesian
  closed category $\mathcal{C}$ is extensional. 
\end{proposition}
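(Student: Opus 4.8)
The plan is to unwind the standard interpretation of the identity type and observe that it is carried by a monomorphism, so that the mere existence of a term forces the two endpoints to coincide. Recall first how $\id{A}$ is modelled. Working in a context interpreted by an object $\Gamma$, a type $\judge{\Gamma}{A:\type}$ is a map $A\to\Gamma$, and two terms $\judge{\Gamma}{a:A}$ and $\judge{\Gamma}{b:A}$ are sections $a,b:\Gamma\to A$ of this map. In the context $x:A,y:A$ the type $\judge{x:A,y:A}{\id{A}(x,y):\type}$ is interpreted by the fibrewise diagonal $\Delta:A\to A\times_{\Gamma}A$, regarded as an object of the slice over $A\times_\Gamma A$, with the reflexivity term $r_A$ interpreted by $\Delta$ itself. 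The first step is therefore to record that the interpretation of $\id{A}(a,b)$ in context $\Gamma$ is obtained by substituting the pair $\langle a,b\rangle:\Gamma\to A\times_\Gamma A$, i.e.\ as the pullback of $\Delta$ along $\langle a,b\rangle$.

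The second, and decisive, step is the observation that the fibrewise diagonal $\Delta:A\to A\times_\Gamma A$ is a monomorphism, since post-composing it with either projection yields $\mathrm{id}_A$. As monomorphisms are stable under pullback, the display map $\id{A}(a,b)\to\Gamma$ interpreting the identity type is itself monic; that is, $\id{A}(a,b)$ is, up to isomorphism over $\Gamma$, a subobject of $\Gamma$.

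Now I would invoke the hypothesis of the reflection rule. A term $\judge{\Gamma}{p:\id{A}(a,b)}$ is interpreted as a section $p:\Gamma\to\id{A}(a,b)$ of this monic display map. By the universal property of the pullback defining $\id{A}(a,b)$, such a section produces a map $c:\Gamma\to A$ satisfying $\Delta\circ c=\langle a,b\rangle$, that is, $\langle c,c\rangle=\langle a,b\rangle$. Composing with the two projections of $A\times_\Gamma A$ yields $a=c=b$, so $a$ and $b$ are equal as morphisms $\Gamma\to A$. Since the judgement $\judge{\Gamma}{a=b:A}$ is interpreted precisely as this equality of morphisms, the reflection rule (\ref{eq:reflection}) is validated, which is what we want.

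I expect the only real subtlety to lie in the first step --- correctly identifying the standard interpretation of $\id{A}$ with the diagonal and checking that substitution of $\langle a,b\rangle$ produces the stated pullback --- rather than in the reflection argument itself, which is the immediate diagram chase above once the diagonal is known to be monic. In particular, no use is made of the elimination or conversion rules for $\id{}$: extensionality is forced purely by the fact that identity types are interpreted by monomorphisms in any category with finite limits, of which a locally cartesian closed category is an instance.
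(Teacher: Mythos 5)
Your argument is correct as far as it goes, but it takes a genuinely different route from the paper, and the difference matters. You fix the interpretation of $\id{A}$ to be the (fibrewise) diagonal at the outset and then observe that reflection follows because the diagonal is monic and stable under pullback --- explicitly noting that the elimination and conversion rules play no role. The paper does essentially the opposite: it does not presuppose any particular interpretation of $\id{A}$, but shows that \emph{any} object validating the formation, introduction, elimination and conversion rules in a locally cartesian closed category is forced to be isomorphic to the diagonal. The key move there is to regard the acyclic-cofibration-like map $r:A\to\id{A}$ from the introduction rule as itself a dependent type over $\id{A}$ (with $d=1_A$ as the required family of terms), so that the elimination and conversion rules produce a section $J$ of $r$ exhibiting $\id{A}$ as isomorphic to $A$ over $A\times A$; reflection then follows as in your argument. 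Your version is more elementary and suffices for the literal statement if ``the standard interpretation'' is read as Seely's diagonal/equalizer semantics, but it leaves open the possibility that some \emph{other} choice of $\id{A}$ in an LCCC could escape extensionality. The paper's proof closes that door, and this stronger conclusion is precisely what motivates the passage to weak factorization systems in Section \ref{section:models}: no amount of cleverness in choosing $\id{A}$ within a bare LCCC can yield an intensional model. So while nothing in your diagram chase is wrong, you have proved a weaker fact than the one the paper is actually after, and the step you explicitly discard --- the use of the elimination rule --- is the heart of the paper's argument.
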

\begin{proof}
  Note that it suffices to consider
  ``parameterized'' versions of the rules governing identity types.
  I.e., the rules given above
  are equivalent, by the structural rules of the theory, to the rules
  obtained by replacing any terms $a,b:A$ and $p:\id{A}(a,b)$ by
  variables $x,y:A$ and $z:\id{A}(x,y)$, and stating judgements in
  the appropriate context.  E.g., (\ref{eq:id_elim}) is equivalent to
  \begin{align*}
    \begin{prooftree}
      \[\judge{x:A,y:A,z:\id{A}(x,y)}{D(x,y,z):\type}
      \thickness.0em
      \justifies
      \judge{x:A}{d(x):D\bigl(x,x,r_{A}(x)\bigr)}
      \]
      \justifies
      \judge{x,y:A,z:\id{A}(x,y)}{J_{A,D}(d,x,y,z):D(x,y,z).}
    \end{prooftree}
  \end{align*}
  As such, it suffices to prove that, when $A$ is an object of $\mathcal{C}$,
  any object $\id{A}$ satisfying the introduction, elimination and conversion
  rules for the identity type is isomorphic to the diagonal
  $\Delta:A\to<150>A\times A$.  By the formation and introduction
  rules (\ref{eq:id_form}) and (\ref{eq:id_intro}), there exists a factorization
  \begin{align}
    \label{eq:intro_and_form}
    \bfig
    \Vtriangle<350,350>[A`\id{A}`A\times A;r`\Delta`p]
    \efig
  \end{align}
  of the diagonal.  In the interpretation, $r$ may itself be regarded as a type over
  $\id{A}$.  By (\ref{eq:intro_and_form}), this type satisfies the
  hypotheses of the elimination rule and therefore there exists a section
  $J:\id{A}\to<150>A$ of $r$,
  \begin{align*}
    \bfig
    \Vtriangle|blr|<350,350>[A`\id{A}`A\times A,;r`\Delta`p]
    \morphism(700,350)/{@{..>}@/_1em/}/<-700,0>[\id{A}`A;J]
    \efig
  \end{align*}
  as required.
\end{proof}
We now consider homotopy
models of type theory, which do not validate the reflection rule.

\section{Homotopy Theoretic Models}\label{section:models}

In order to obtain models of type theory which do not validate the
reflection rule additional higher-dimensional structure must be
considered in the interpretation.  One way to add such structure is
via the device of weak-factorization systems and
Quillen model categories (cf.~\cite{Quillen:HA} and \cite{Bousfield:CFSC}).

\subsection{Weak factorization systems}

In any category $\mathcal{C}$, given maps $f:A\to<150>B$ and $g:C\to<150>D$, we write 
\begin{align*}
  f\pitchfork g
\end{align*}
to indicate that $f$ has \emph{left-lifting property} (\emph{LLP}) with respect to
$g$.  I.e. for any commutative square
\begin{align*}
  \bfig
  \square<400,350>[A`C`B`D;h`f`g`k]
  \morphism(0,0)|m|/..>/<400,350>[B`C;l]
  \efig
\end{align*}
there exists a map $l:B\to<150>C$ such that $g\circ l=k$ and $l\circ
f=h$.  Similarly, if $\mathfrak{M}$ is any collection of maps we denote by
$^{\pitchfork}\mathfrak{M}$ the collection of maps in
$\mathcal{C}$ having the LLP with respect to all maps in $\mathfrak{M}$.
The collection of maps $\mathfrak{M}^{\pitchfork}$ is defined similarly.

A \emph{weak factorization system} $(\mathfrak{L},\mathfrak{R})$ in a category
$\mathcal{C}$ consists of two collections $\mathfrak{L}$ (the
``left-class'') and
$\mathfrak{R}$ (the ``right-class'') of maps in $\mathcal{C}$ such that 
\begin{enumerate}
\item Every map $f:A\to<150>B$ has a factorization as
  \begin{align*}
    \bfig
    \Vtriangle<300,300>[A`C`B;i`f`p]
    \efig
  \end{align*}
  where $i$ is a member of $\mathfrak{L}$ and $p$ is a member of $\mathfrak{R}$.
\item $\mathfrak{L}^{\pitchfork}=\mathfrak{R}$ and $\mathfrak{L}=\;^{\pitchfork}\mathfrak{R}$.
\end{enumerate}

\subsection{Model categories}

A (\emph{closed}) \emph{model category} \cite{Quillen:HA} is a bicomplete category $\mathcal{C}$
equipped with subcategories $\mathfrak{F}$ (\emph{fibrations}), $\mathfrak{C}$
(\emph{cofibrations}) and $\mathfrak{W}$ (\emph{weak equivalences})
satisfying the following two conditions:
\begin{enumerate}
\item (``Three-for-two'') Given a commutative triangle
  \begin{align*}
    \bfig
    \Vtriangle<300,300>[A`B`C;f`h`g]
    \efig
  \end{align*}
  if any two of $f,g,h$ are weak equivalences, then so is the third.
\item Both $(\mathfrak{C},\mathfrak{F}\cap\mathfrak{W})$ and
$(\mathfrak{C}\cap\mathfrak{W},\mathfrak{F})$ are weak factorization systems.
\end{enumerate}
A map $f$ is an \emph{acyclic cofibration} if it is in
$\mathfrak{C}\cap\mathfrak{W}$, i.e. both a
cofibration and a weak equivalence.  Similarly, an \emph{acyclic
  fibration} is a map in $\mathfrak{F}\cap\mathfrak{W}$, i.e. which is
simultaneously a fibration and a weak
equivalence.  An object $A$ is said to be \emph{fibrant} if the
canonical map $A\to<150>1$ is a fibration.  Similarly, $A$ is
\emph{cofibrant} if $0\to<150>A$ is a cofibration.

Examples of model categories include the following:
\begin{enumerate}
\item The category $\topcat$ of topological spaces with fibrations the
  \emph{Serre fibrations}, weak equivalences the weak homotopy
  equivalences and cofibrations those maps which have the LLP with
  respect to acyclic fibrations.  The cofibrant objects in this model
  structure are retracts of spaces constructed, like CW-complexes, by
  attaching cells.
\item The category $\ssets$ of simplicial sets with cofibrations the
  monomorphisms, fibrations the Kan fibrations and weak equivalences
  the weak homotopy equivalences.  The fibrant objects for this model
  structure are the Kan complexes.
\item The category $\groupoids$ of (small) groupoids with cofibrations
  the functors injective on objects, fibrations the Grothendieck
  fibrations and weak equivalences the categorical equivalences.  Here
  all objects are both fibrant and cofibrant.
\end{enumerate}
The reader should consult, e.g., \cite{Hovey:MC} or \cite{Dwyer:HTMC}
for further examples and details.

\subsection{Path objects}

Recall from \cite{Hovey:MC}, that in a model category $\mathcal{C}$ a (\emph{very good}) \emph{path
  object} $A^{I}$ for an object $A$ consists of a factorization 
\begin{align*}
  \bfig
  \Vtriangle<300,300>[A`A^{I}`A\times A;r`\Delta`p]
  \efig
\end{align*}
of the diagonal map $\Delta:A\to<150>A\times A$ as an acyclic
cofibration $r$ followed by a fibration $p$.  Paradigm examples of
path objects are given by exponentiation by the
``unit interval'' $I$ in either $\groupoids$ or, when the object $A$
is a Kan complex, in $\ssets$.  In
$\groupoids$, $I$ is the connected groupoid with
exactly two objects (i.e., the ``arrow category'') and in $\ssets$ it
is the $1$-simplex $\Delta[1]$.

Path objects may also be fruitfully considered in the context of weak
factorization systems, where the left class $\mathfrak{L}$ is
thought of as the acyclic cofibrations and the right class
$\mathfrak{R}$ as the fibrations.  In both weak factorization systems
and model categories path objects are guaranteed to exist, but need
not be uniquely determined.  Moreover,
the path object construction is often functorial.

\subsection{The interpretation}

Whereas the idea of the Curry-Howard correspondence is often
summarized by the slogan ``Propositions as Types'', the idea
underlying the interpretation of type theory in weak factorization
systems and model categories is 
\begin{center}
  \emph{Fibrations as Types}.
\end{center}
Specifically, assume that $\mathcal{C}$ is a finitely complete
category with a weak factorization system
$(\mathfrak{L},\mathfrak{R})$.  Because most interesting examples
arise from model categories, we refer to maps in $\mathfrak{L}$ as
acyclic cofibrations and those in $\mathfrak{R}$
as fibrations.  We describe the interpretation in the style of an
``internal language'' for $\mathcal{C}$, as in Section
\ref{section:type_theory} for locally cartesian closed categories.

In such a category $\mathcal{C}$, a judgement $\judge{}{A:\type}$ is
interpreted as a fibrant object $A$ of $\mathcal{C}$.
Similarly, $\judge{x:A}{B(x):\type}$ is interpreted as a
fibration $f:B\to<150>A$.  Contexts are interpreted as chains of
fibrations.  Terms $\judge{\Gamma}{a:A}$ in context are interpreted,
as usual, as sections of the interpretation of
$\judge{\Gamma}{A:\type}$.  

Thinking, in this way, of fibrant objects as types and fibrations as
dependent types, the natural interpretation of the identity type
$\id{A}(a,b)$ should be as the ``fibrant object'' of paths in $A$ from $a$ to
$b$, and $\judge{x,y:A}{\id{A}(x,y):\type}$ should be ``the'' fibrant
object of all paths in $A$.  That is, it should be a path object for $A$.

We now show that this interpretation soundly models a form of
type theory with identity types (see Appendix \ref{section:appendix}
for the details of this theory).  The interpretation of type formers
other than identity types, together with some of the coherence issues
related to the interpretation, is discussed in Section \ref{section:coherence}.
\begin{theorem}\label{theorem:main}
  Let $\mathcal{C}$ be a finitely complete category with a weak
  factorization system and a functorial choice $(-)^{I}$ of path
  objects in $\mathcal{C}$, and
  all of its slices, which is stable under substitution.  I.e., given
  any fibration $B\to<150>A$ and $\sigma:A'\to<150>A$,
  \begin{align*}
    \sigma^{*}\bigl(B^{I}\bigr) & \iso \bigl(\sigma^{*}B\bigr)^{I}.
  \end{align*}
  Then $\mathcal{C}$ is a model of a form of Martin-L\"{o}f type theory
  with identity types.  
\end{theorem}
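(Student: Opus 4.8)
The plan is to verify, rule by rule, that the interpretation described above validates the formation, introduction, elimination and conversion rules (\ref{eq:id_form})--(\ref{eq:id_conv}) for the identity type, with $\id{A}(a,b)$ interpreted by means of the chosen path object. Exactly as in the proof of the preceding proposition, it suffices to treat the \emph{parameterized} forms of these rules; by the structural rules of the theory this reduces the whole problem to a single fibrant object $A$, but now in an arbitrary slice $\mathcal{C}/\Gamma$ rather than in $\mathcal{C}$ itself. This is precisely why the hypothesis demands a functorial choice of path objects in $\mathcal{C}$ \emph{and all of its slices}: a type $A$ in context $\Gamma$ is a fibration $A\to\Gamma$, i.e. a fibrant object of $\mathcal{C}/\Gamma$, and its identity type is to be interpreted by the path object of $A$ computed there. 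Accordingly, first I would fix such an $A$ and interpret $\judge{x,y:A}{\id{A}(x,y):\type}$ as the fibration $p:A^{I}\to A\times A$ coming from the path-object factorization $A\xrightarrow{r}A^{I}\xrightarrow{p}A\times A$ of the diagonal.

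Granting this, the formation and introduction rules are immediate: $p$ is a fibration, so $\id{A}$ is a legitimate dependent type, and the acyclic cofibration $r$ is a section of $p$ lying over the diagonal $\Delta$, which interprets the reflexivity term $r_{A}$. The essential step is the elimination rule. A display $\judge{x,y:A,z:\id{A}(x,y)}{D(x,y,z):\type}$ is interpreted as a fibration $\pi:D\to A^{I}$, and the premise $\judge{x:A}{d(x):D(x,x,r_{A}(x))}$ as a map $d:A\to D$ satisfying $\pi\circ d=r$. To interpret the elimination term $J_{A,D}$ I would solve the lifting problem
\begin{align*}
\bfig
\square<500,400>[A`D`A^{I}`A^{I};d`r`\pi`]
\morphism(0,0)|m|/..>/<500,400>[A^{I}`D;J]
\efig
\end{align*}
in which the bottom edge is the identity of $A^{I}$. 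Here the left-hand map $r$ belongs to $\mathfrak{L}$ (it is an acyclic cofibration) and the right-hand map $\pi$ belongs to $\mathfrak{R}$ (it is a fibration), so $r\pitchfork\pi$ yields a diagonal filler $J:A^{I}\to D$. This $J$ is a section of $\pi$, hence interprets the conclusion $J_{A,D}(d,a,b,p):D(a,b,p)$, and the lifting identity $J\circ r=d$ is exactly the content of the conversion rule (\ref{eq:id_conv}): restricting $J$ along the reflexivity section $r$ returns $d$ on the nose.

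The step I expect to be the main obstacle is not producing the filler---that is a single appeal to the lifting property---but showing that these choices cohere into a genuinely \emph{sound} interpretation, i.e. one that is stable under substitution along any $\sigma:\Gamma'\to\Gamma$. Since a weak factorization system supplies fillers only up to existence, the interpretations of $\id{A}$, of $r$, and above all of $J$ must be shown to commute with pullback, and this is exactly what the two standing hypotheses secure: functoriality of $(-)^{I}$ in each slice lets the data $p$ and $r$ be chosen coherently, while the stability isomorphism $\sigma^{*}(B^{I})\iso(\sigma^{*}B)^{I}$ guarantees that forming identity types commutes with substitution. I would therefore devote the bulk of the argument to checking that these isomorphisms are compatible with the lifts defining $J$, so that the structural (substitution) rules are respected; once that coherence is in place, the formation, introduction and conversion clauses follow formally from the path-object factorization as above, and the theorem follows.
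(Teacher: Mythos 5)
Your verification of the four rules (\ref{eq:id_form})--(\ref{eq:id_conv}) is exactly the paper's argument: interpret $\judge{x,y:A}{\id{A}(x,y)}$ as the path-object fibration $p:A^{I}\rightarrow A\times A$, obtain formation from $p$ being a fibration and introduction from the section $r$, and obtain elimination and conversion as the two commuting triangles of a diagonal filler $J$ for the square with $d$ on top, $r$ on the left, the display fibration on the right, and the identity on the bottom. The paper even makes the same reduction you do, working in a single context because the structure is assumed stable under slicing.

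The difficulty is your final paragraph. You propose to devote the ``bulk of the argument'' to showing that the chosen lifts $J$ commute with pullback along $\sigma:\Gamma'\rightarrow\Gamma$. That step would fail: a weak factorization system provides fillers with no uniqueness or naturality, and the paper says explicitly in Section \ref{section:coherence} that the Beck--Chevalley equation (\ref{eq:coherence}) for $J$ need not hold precisely because ``it may not, in general, be possible to choose such lifts in a way which is compatible with pullback.'' The missing idea is hidden in the phrase ``a \emph{form of} Martin-L\"{o}f type theory'': the theory actually being modelled (Appendix \ref{section:appendix}) imposes substitution-stability only on the identity type former and on the reflexivity terms, not on the $J$ terms. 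Those two Beck--Chevalley rules are exactly what functoriality of $(-)^{I}$ and the hypothesis $\sigma^{*}\bigl(B^{I}\bigr)\iso\bigl(\sigma^{*}B\bigr)^{I}$ deliver, and nothing more is required. So the coherence programme you sketch for $J$ is both unobtainable in general and unnecessary for the theorem as stated; pursuing it would amount to trying to prove a strictly stronger claim that the authors deliberately avoid.
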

\begin{proof}
  We may work in the empty context since the relevant structure is
  stable under slicing.  Given such a choice of path objects, we
  interpret, given a fibrant object $A$, the judgement
  $\judge{x,y:A}{\id{A}(x,y)}$ as the path object fibration
  $p:A^{I}\to<150>A\times A$.  Because $p$ is a fibration,
  the formation rule (\ref{eq:id_form}) is satisfied.  Similarly, the introduction rule
  (\ref{eq:id_intro}) is valid because $r:A\to<150>A^{I}$ is a section of $p$.

  For the elimination and conversion rules, assume
  that the following premisses are given 
  \begin{align*} 
    \judge{x:A,y:A,z:\id{A}(x,y)}{D:\type&},\\
    \judge{x:A}{d(x):D(x,x,r_{A}(x))&}. 
  \end{align*} 
  As such, there exists a fibration $g:D\to<150>A^{I}$ together with a
  map $d:A\to<150>D$ such that $g\circ d=r$.  This data yields the
  following commutative square: 
  \begin{align*} 
    \bfig 
    \square<400,350>[A`D`A^{I}`A^{I}.;d`r`g`1] 
    \efig 
  \end{align*} 
  Because $g$ is a fibration and $r$ is, by definition, an acyclic
  cofibration, there exists a diagonal filler.  
  \begin{align}\label{eq:the_diagram} 
    \bfig 
    \square<400,350>[A`D`A^{I}`A^{I}.;d`r`g`1] 
    \morphism(0,0)|m|/..>/<400,350>[A^{I}`D;J] 
    \efig 
  \end{align} 
  Choose such a filler $J$ as the interpretation of the term:
  \begin{align*}
    \judge{x,y:A,z:\id{A}(x,y)}{J_{A,D}(d,x,y,z):D(x,y,z)}.
  \end{align*}
  Commutativity of the bottom triangle of (\ref{eq:the_diagram}) is
  precisely the conclusion of the elimination rule (\ref{eq:id_elim}) and commutativity
  of the top triangle is the conversion rule (\ref{eq:id_conv}). 
\end{proof}
Examples of categories satisfying the hypotheses of Theorem
\ref{theorem:main} include $\groupoids$, $\ssets$ and many
\emph{simplicial model categories} \cite{Quillen:HA} (including, e.g.,
simplicial sheaves and presheaves).  We include a proof of this fact
for the benefit of those readers who are familiar with simplicial
model categories.  This example will be considered in more detail in
\cite{Warren:PhD}.
\begin{corollary}
  Every simplicial model category $\mathcal{C}$ in which $\mathfrak{C}$ is
  the class of monomorphisms satisfies the hypotheses of Theorem
  \ref{theorem:main}, and is therefore a model of intensional type theory.
  \begin{proof}
    Let $I$ be the unit interval $\Delta[1]$ in $\ssets$, and consider, for any
    fibrant object $A$ of $\mathcal{C}$, the factorization of the
    diagonal given by 
    \begin{align*}
      \bfig
      \Vtriangle<350,350>[A`A^{I}`A\times A;r`\Delta`p]
      \efig
    \end{align*}
    where $r$ is the ``constant loop'' map obtained as the transpose,
    under the (enriched) adjunctions involved, of the map 
    $I\to<150>\mathcal{C}[A,A]$ obtained by composing the canonical
    map $I\to<150>1$ with the insertion of identities map
    $1\to<150>\mathcal{C}[A,A]$ and $p$ is the map obtained by
    $A^{I}\to<150>A^{\partial I}$ induced by the inclusion of the
    boundary $\partial I$ into $I$.  Because $\partial I\to<150>I$ is
    a monomorphism and $A$ is fibrant it follows that $p$ is a
    fibration.  Because $r$ is a simplicial homotopy equivalence it is
    also a weak equivalence.  The required pullback stability is seen
    to hold using the adjunctions defining the
    factorization.  Stability under slicing of this choice of
    factorization (as well as the structure defining simplicial model
    categories) is a routine verification.
  \end{proof}
\end{corollary}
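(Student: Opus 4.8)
The plan is to build the functorial path object directly from the simplicial enrichment, taking $A^{I}$ to be the cotensor $A^{\Delta[1]}$ of $A$ by the standard $1$-simplex. Recall that a simplicial model category is tensored and cotensored over $\ssets$ and satisfies Quillen's pullback-power axiom SM7: for a cofibration $i\colon K\rightarrow L$ of simplicial sets and a fibration $q\colon X\rightarrow Y$ in $\mathcal{C}$, the induced map $X^{L}\rightarrow X^{K}\times_{Y^{K}}Y^{L}$ is a fibration, acyclic if either $i$ or $q$ is. I would obtain the factorization of the diagonal by applying the contravariant cotensor functor $A^{(-)}$ to the maps $\partial I\hookrightarrow I\rightarrow\Delta[0]$. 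Since $\partial I$ is the disjoint union of two copies of $\Delta[0]$, the cotensor sends it to $A^{\partial I}\iso A\times A$, while $A^{\Delta[0]}\iso A$; functoriality of the cotensor then gives at once that the composite $A\xrightarrow{r}A^{I}\xrightarrow{p}A\times A$ equals the diagonal $\Delta$.

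First I would verify that this is a genuine path-object factorization. For $p$, I apply SM7 with $q\colon A\rightarrow 1$ (a fibration since $A$ is fibrant) and $i\colon\partial I\hookrightarrow I$ (a monomorphism, hence a cofibration in $\ssets$); because $1^{K}\iso 1$ the induced map collapses to $p\colon A^{I}\rightarrow A^{\partial I}\iso A\times A$, which is therefore a fibration. For $r$, the map $I\rightarrow\Delta[0]$ is a simplicial homotopy equivalence, and cotensoring is a simplicial functor, so $r=A^{(I\rightarrow\Delta[0])}$ is again a simplicial homotopy equivalence and hence a weak equivalence. Moreover $r$ is a monomorphism, since $p\circ r=\Delta$ is a (split) monomorphism and any map whose postcomposite is monic is itself monic; because $\mathfrak{C}$ is by hypothesis exactly the class of monomorphisms, $r$ is a cofibration, and thus an acyclic cofibration. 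That $A^{I}$ is fibrant follows as $A^{I}\rightarrow A\times A\rightarrow 1$ is a composite of fibrations. Functoriality is automatic, since $(-)^{I}$ is literally the cotensor functor and $r,p$ are natural transformations.

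The substantive point, which I expect to demand the most care, is stability under substitution together with the fact that the whole construction descends to every slice. Slices of a simplicial model category are again simplicial model categories, and the cotensor of a fibration $f\colon B\rightarrow A$ by a simplicial set $K$ in $\mathcal{C}/A$ is computed as the pullback of $f^{K}\colon B^{K}\rightarrow A^{K}$ along the ``constant'' map $c_{A}\colon A\rightarrow A^{K}$ induced by $K\rightarrow\Delta[0]$; thus the slice path object of $B$ has total space $B^{I}\times_{A^{I}}A$. To establish $\sigma^{*}\bigl(B^{I}\bigr)\iso\bigl(\sigma^{*}B\bigr)^{I}$ for $\sigma\colon A'\rightarrow A$, I would use two facts: that $(-)^{I}$, being right adjoint to $(-)\otimes I$, preserves pullbacks, whence $(\sigma^{*}B)^{I}=(B\times_{A}A')^{I}\iso B^{I}\times_{A^{I}}(A')^{I}$; and that the constant map is natural, so $c_{A}\circ\sigma=\sigma^{I}\circ c_{A'}$. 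Composing pullbacks on each side then exhibits both $\sigma^{*}(B^{I})$ and $(\sigma^{*}B)^{I}$ as canonically isomorphic over $A'$ to $B^{I}\times_{A^{I}}A'$ with the same structure map $A'\rightarrow A^{I}$, giving the desired isomorphism. Stability of the factorization under slicing is then a routine check from the same pullback description of the slice cotensor. The principal obstacle is simply the bookkeeping of how the slice cotensor interacts with the pullback functor $\sigma^{*}$; once the pullback formula for the slice cotensor and the naturality of the constant map are secured, the isomorphism is forced by the relevant universal properties.
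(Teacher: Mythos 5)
Your proposal is correct and follows essentially the same route as the paper: the path object is the cotensor $A^{\Delta[1]}$, $p$ is a fibration by the pullback-power axiom SM7 applied to $\partial I\hookrightarrow I$ and $A\to 1$, $r$ is a weak equivalence because it is a simplicial homotopy equivalence, and pullback stability comes from the cotensor adjunctions. You in fact supply details the paper leaves implicit --- notably that $r$ is a monomorphism (hence a cofibration under the hypothesis on $\mathfrak{C}$) and the explicit pullback computation for the slice cotensor --- and these are all sound.
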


\section{Additional Topics}\label{section:coherence}

We now briefly consider the particular features of the type theory occurring as the internal
language of model categories, as well as the connection of this work
with the groupoid model of Hofmann and Streicher \cite{Hofmann:GITT}.
These topics will be addressed fully in \cite{Warren:PhD}. 

\subsection{The internal language of model categories}

The form of type theory to which Theorem \ref{theorem:main} applies
differs from the standard theory presented in, say,
\cite{MartinLof:ITT} in two ways. Namely, because arbitrary model
categories need not be locally
cartesian closed --- or, even if they are, need not have $\Pi$ functors which preserve
fibrations --- such a category may not possess sufficient structure to
interpret dependent products in the standard way.  However, for the purposes of
modelling type theory this is not much of a limitation since most
model categories do possess well behaved $\Pi$ functors.  So, for
example, $\ssets$ as well as most other presheaf model categories
do, \emph{qua} toposes with appropriate model structures, support the
interpretation of dependent products.  Note that the
rules for dependent sums are, trivially, always valid in this
interpretation because fibrations are stable under composition.  The second
distinguishing feature of the internal language of model categories is
that the interpretation of $J$ terms need not satisfy the
``Beck-Chevalley'' condition --- traditionally assumed as part of
Martin-L\"{o}f type theory --- which states that, given
$\judge{v:A}{B(v):\type}$ and $c:A$ together
with the other hypotheses of the elimination rule,
\begin{align}\label{eq:coherence}
  \biggl(J_{B(v),D}\bigl(d(v),a(v),b(v),p(v)\bigr)\biggr)[c/v] & = J_{B(c),D}\bigl(d(c),a(c),b(c),p(c)\bigr).
\end{align}
The reason that (\ref{eq:coherence}) need not hold is that in
interpreting the $J$ term a choice of lift (\ref{eq:the_diagram}) is
made, and it may not, in general, be possible
to choose such lifts in a way which is compatible with pullback.
Nonetheless, there will always exists a (right)
homotopy between the interpretations of these terms and, in particular,
\begin{align*}
  \id{}\biggl(J_{B(v),D}\bigl(d(v),a(v),b(v),p(v)\bigr)[c/v],\;\;J_{B(c),D}\bigl(d(c),a(c),b(c),p(c)\bigr)\biggr)
\end{align*}
is always inhabited.  As such, the theory must be formulated either
as it is here, without requiring (\ref{eq:coherence}), or as a
form of dependent type theory with \emph{explicit substitution}
\cite{Abadi:ES,Curien:SUI}.

However, we believe that the failure of (\ref{eq:coherence}) to hold
constitutes a virtue, rather than a defect, of homotopy-theoretic
models.  Indeed, from the perspective of homotopy theory,
higher-dimensional category theory, and, indeed, mechanical
implementation of type theory, an internal language with some
(limited) form of explicit substitution is quite acceptable.  The
detailed syntax of this theory will be described in \cite{Warren:PhD}.

\subsection{Models satisfying the coherence condition}

Although the form of type theory modelled in all model categories and
finitely complete categories with weak factorization systems is
interesting in its own right, it is natural to consider models
satisfying the coherence condition (\ref{eq:coherence}).  A detailed
analysis of models satisfying (\ref{eq:coherence}) will be found in
\cite{Warren:PhD}; for now, we sketch one way to obtain such models.
In order to simplify the discussion we assume
the ambient category $\mathcal{C}$ is a cartesian closed model
category (or an appropriately enriched model category).  Then, if
$\mathcal{C}$ contains a unit interval $I$ satisfying certain basic
axioms such that exponentiation $A^{I}$ yields a path object for each
$A$, it is possible to define a (fibered) endofunctor
$T:\mathcal{C}\to<150>\mathcal{C}$ the pointed algebras of which are
distinguished fibrations called \emph{split fibrations} (and in many
cases $T$ will be a monad, although this is not strictly necessary).
Instead of interpreting types as fibrations we now interpret types as
\emph{split fibrations} in this sense.  Assuming that $I$ possesses
appropriate structure it is possible to choose lifts
(\ref{eq:the_diagram}) which satisfy (\ref{eq:coherence}).  For
example, the Hofmann-Streicher model in $\groupoids$ is obtained in
this way from the model structure.  It remains an open question whether it
is possible to prove a precise coherence (or strictification) theorem,
relating homotopy-theoretic models which do not satisfy (\ref{eq:coherence})
with models which do, analogous to the result of Hofmann
\cite{Hofmann:OITTLCCC} which, in a sense, solves the coherence issue
related to the interpretation extensional type theory in locally
cartesian closed categories.  
 
\subsection{Acknowledgements}

We would like to thank Andrej Bauer, Nicola Gambino, Andr\'{e} Joyal,
Per Martin-L\"{o}f and Alex Simpson for discussions of this material.
We also thank Erik Palmgren and Richard Garner for inviting us to
speak at the workshop ``Identity Types - Topological and Categorical
Structure'' held at Uppsala in November of 2006.  Finally, we give special
thanks to Ieke Moerdijk for suggesting this research topic and to
Thomas Streicher for many useful discussions.

\appendix

\section{The Syntax of Type Theory}\label{section:appendix}

The form of type theory validated as indicated in Theorem
\ref{theorem:main} consists of (\ref{eq:id_form})-(\ref{eq:id_conv})
together with the usual structural rules
(cf.~\cite{MartinLof:ITT,Nordstrom:PMLTT}) and the following
``Beck-Chevalley'' rules for the identity type and reflexivity terms:

\begin{align*}
  \begin{prooftree}
    \judge{x:C}{A(x):\type}\quad\judge{x:C}{a(x),b(x):A(x)}\quad \judge{}{c:C}
    \justifies
    \judge{}{\biggl(\id{A(x)}\bigl(a(x),b(x)\bigr)\biggr)[c/x]\;=\;\id{A(c)}\bigl(a(c),b(c)\bigr)\;:D\bigl(a(c),b(c),p(c)\bigr)}
    \using{\id{}\text{ B.-C.}}
  \end{prooftree}
\end{align*}

\begin{align*}
  \begin{prooftree}
    \judge{x:C}{A(x):\type}\quad\judge{x:C}{a(x):A(x)}\quad \judge{}{c:C}
    \justifies
    \judge{}{\biggl(r_{A(x)}\bigl(a(x)\bigr)\biggr)[c/x]\;=\;r_{A(c)}\bigl(a(c)\bigr):\id{A(c)}(a(c),a(c))}
    \using{r\text{ B.-C.}}
  \end{prooftree}
\end{align*}
\bibliographystyle{hplain}
\bibliography{short}
\end{document}